\newtheorem{theorem}{Theorem}[section]
\newtheorem{lemma}{Lemma}[section]
\newtheorem{definition}{Definition}[section]
\numberwithin{equation}{section}
\numberwithin{table}{section}
\numberwithin{figure}{section}
\title{Multiplicative independence in the sequence of\\ $k-$generalized Lucas numbers}
\author{Herbert Batte$^{1,*} $, Mahadi Ddamulira$^{1}$, Juma Kasozi$^{1}$, and Florian Luca$^{2}$}
\date{}
\begin{document}
\maketitle
\abstract{  Let $ (L_n^{(k)})_{n\geq 2-k} $ be the sequence of $k-$generalized Lucas numbers for some fixed integer $k\ge 2$, whose first $k$ terms are $0,\;\ldots\;,\;0,\;2,\;1$ and each term afterward is the sum of the preceding $k$ terms. In this paper, we find all pairs of the $k-$generalized Lucas numbers that are multiplicatively dependent. } 

{\bf Keywords and phrases}: $k-$generalized Lucas numbers; multiplicatively independent integers; linear forms in logarithms.

{\bf 2020 Mathematics Subject Classification}: 11B39, 11D61, 11D45.

\thanks{$ ^{*} $ Corresponding author}

\section{Introduction}\label{intro}
\subsection{Background}
\label{sec:1.1}
Let $k \ge 2$ be an integer. The sequence of $k$-generalized Fibonacci numbers $(F_n^{(k)})_{n\in {\mathbb Z}}$ is the sequence given by 
$$
F_n^{(k)}=F_{n-1}^{(k)}+\cdots+F_{n-k}^{(k)}
$$
with $F_1^{(k)}=1$ and $F_{i}^{(k)}=0$ for $i=2-k,3-k,\ldots,0$. If one keeps the same recurrence relation (every term is the sum of the previous $k$ terms) but instead of the initial values being 
$0,0,\ldots,0,0,1$ one starts with $0,0,\ldots,0,2,1$, then one obtains the sequence of $k-$generalized Lucas numbers. When $k = 2$, these coincide with the classical sequences  of Fibonacci and Lucas numbers. 
If $a,~b$ are nonzero integers then we say that they are multiplicatively independent if the only solution in integers $x,y$ of the equation $a^x b^y=1$ is $x=y=0$ and multiplicatively dependent otherwise. Note that any pair of integers such that one of them is $1$ is a pair of multiplicatively dependent integers. 

In \cite{GoL}, G\'omez and Luca fixed $k\ge 2$ and looked at the instances $n>m\ge 1$ for which $F_n^{(k)}$ and $F_m^{(k)}$ are multiplicatively dependent. This is certainly so if $m\in \{1,2\}$ since 
$F_1^{(k)}=F_2^{(k)}=1$. In addition, since $F_m^{(k)}=2^{m-2}$ for all $m\in [2,k+1]$, it follows that $F_n^{(k)}$ and $F_m^{(k)}$ are multiplicatively dependent when $3\le m<n\le k+1$. The main result of 
\cite{GoL} is that these are the only instances when $F_n^{(k)}$ and $F_m^{(k)}$ are multiplicatively dependent when $k\ge 3$. When $k=2$, there is the additional instance $(m,n)=(3,6)$ as $F_3^{(2)}=2$ and $F_6^{(2)}=2^3$. The instance $k=2$ can be easily treated with Carmichael's primitive divisor theorem \cite{Car} which asserts that $F_n^{(2)}$ has a primitive prime factor (a prime factor $p$ which does not divide 
$F_m^{(2)}$ for any positive integer $m<n$) provided $n>12$. Then an immediate verification of the cases $3\le m\le n\le 12$ gives the solution $(m,n)=(3,6)$.

Similar results concerning the multiplicative dependence between terms of other linear recurrence sequences can be seen in \cite{GGL} and \cite{BR}. In this paper, we investigate the multiplicative dependence between $L_m^{(k)}$ and $L_n^{(k)}$ for $n>m$. We assume again that $n\ne 1,~m\ne 1$ as $L_1^{(k)}=1$.  We prove the following result.
\subsection{Main Result}\label{sec:1.2}
\begin{theorem}\label{1.2a} 
	Let  $k\ge 2$ and $(L_n^{(k)})_{n\in {\mathbb Z}}$ be the sequence of $k$--generalized Lucas numbers.  If $n>m\ge 0$ are integers with $n\ne 1,~m\ne 1$ such that $L_n^{(k)}$ and $L_m^{(k)}$ are multiplicatively dependent then $(k,m,n)=(2,0,3),~(3,0,7)$. 
\end{theorem}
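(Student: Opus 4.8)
The plan is to first translate multiplicative dependence into a perfect-power equation. Two integers exceeding $1$ are multiplicatively dependent precisely when their logarithms are rationally related, so a solution with $L_n^{(k)}>L_m^{(k)}\ge 2$ is equivalent to the existence of an integer $t\ge 2$ and coprime integers $1\le a<b$ with $L_m^{(k)}=t^a$ and $L_n^{(k)}=t^b$. The hypotheses $m\ne 1$, $n\ne 1$ leave two regimes: the regime $m=0$, where $L_0^{(k)}=2$ forces $t=2$ and $a=1$, so that the problem reduces to locating the powers of two in the sequence $(L_n^{(k)})$; and the regime $m\ge 2$, where $L_m^{(k)}\ge 3$ and both bases and exponents are genuinely unknown.

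Next I would collect the standard analytic input. Writing $\alpha$ for the dominant root of $x^k-x^{k-1}-\cdots-1$, which satisfies $2(1-2^{-k})<\alpha<2$, I would use the Binet-type expansion $L_\ell^{(k)}=c_k\alpha^{\ell}+\varepsilon_\ell$ with a bounded coefficient $c_k$ and error $\varepsilon_\ell\to 0$, together with elementary bounds such as $\alpha^{\ell-1}\le L_\ell^{(k)}<2^{\ell}$. Since $t\ge 2$, these bounds give at once $a<m$ and $b<n$, so all four parameters are controlled by $n$. I would also record the closed form $L_\ell^{(k)}=3\cdot 2^{\ell-2}$, valid for $2\le \ell\le k$, which governs the behaviour of small indices.

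The heart of the argument is the theory of linear forms in logarithms. Substituting the Binet expansion into $L_n^{(k)}=t^b$ produces a linear form $\Lambda_1=b\log t-n\log\alpha-\log c_k$ with $|\Lambda_1|\ll \alpha^{-n}$, and $L_m^{(k)}=t^a$ likewise yields $\Lambda_2=a\log t-m\log\alpha-\log c_k$ with $|\Lambda_2|\ll\alpha^{-m}$. Bounding $|\Lambda_1|$ from below by Matveev's theorem, where the relevant heights are $h(\alpha)=(\log\alpha)/k$ and $h(t)=\log t\approx(m/a)\log\alpha$, and comparing with the upper bound, I expect an explicit bound for $n$ that is polynomial in $k$ up to logarithmic factors. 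The decoupling of the unknown base $t$ from the exponents is the delicate point: because $h(t)$ itself grows with $m/a$, a single estimate is only implicit, and the main obstacle will be to manipulate the two forms, for instance by eliminating $\log t$ through $a\Lambda_1-b\Lambda_2=(bm-an)\log\alpha+(b-a)\log c_k$, so as to extract a clean absolute bound on $n$ in terms of $k$. In the special regime $m=0$ the base is fixed ($t=2$, $a=1$), so the single form $b\log 2-n\log\alpha-\log c_k$ already suffices.

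With such a bound in hand, I would finish by a dichotomy. If $k$ is large relative to the bound, then necessarily $m,n\le k$, whence $L_m^{(k)}=3\cdot 2^{m-2}$ and $L_n^{(k)}=3\cdot 2^{n-2}$; comparing the exact power of $3$ on the two sides of $(L_n^{(k)})^a=(L_m^{(k)})^b$ forces $a=b$, contradicting $a<b$, so no solution survives, the few transitional indices $\ell\in\{k+1,k+2\}$ being dispatched by their explicit values. For the finitely many remaining small $k$, the astronomically large Baker bound on $n$ would be reduced to a feasible range by a Baker--Davenport (LLL / continued-fraction) reduction applied to $\Lambda_1$, after which a direct check recovers exactly the powers of two $L_3^{(2)}=4$ and $L_7^{(3)}=64$, that is $(k,m,n)=(2,0,3)$ and $(3,0,7)$, together with the verification that no solution with $m\ge 2$ exists.
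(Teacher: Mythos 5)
Your opening reduction (both numbers are powers $t^a$, $t^b$ of a common base $t\ge 2$ with coprime $a<b$) is a correct and equivalent reformulation of the paper's equation $(L_n^{(k)})^x=(L_m^{(k)})^y$, and your Baker stage is essentially the paper's: your eliminated form $a\Lambda_1-b\Lambda_2=(bm-an)\log\alpha+(b-a)\log c_k$ is exactly the paper's three--logarithm form (with $c_k=f_k(\alpha)(2\alpha-1)/\alpha$), whose smallness is governed by $\alpha^{-m}$ and hence bounds only $m$; your $\Lambda_1$, where $h(t)\approx (m/a)\log\alpha$ enters, is the paper's four--logarithm form used when $n>m^2$. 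Combining them does yield $n<P(k)$ with $P(k)$ polynomial in $k$ (the paper gets $n<8.5\cdot10^{34}k^8(\log k)^6$). Two remarks even at this stage: you never verify that these linear forms are nonzero, which is a genuine prerequisite for Matveev's theorem and in this problem requires the norm computations in $\mathbb{Q}(\alpha)$ and the Diophantine lemma quoted from \cite{GGL}, not a routine observation.

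The genuine gap is your endgame. You assert that ``if $k$ is large relative to the bound, then necessarily $m,n\le k$,'' and then finish by comparing the exponent of $3$ in $L_\ell^{(k)}=3\cdot 2^{\ell-2}$. But the Baker bound runs the wrong way: it says $n<P(k)$ with $P(k)\gg k^8$, so it never forces $n\le k$, and for every large $k$ the entire range $k<m<n<P(k)$ survives. Worse, that range is precisely where any nontrivial solution must live: if $m\le k$ then $L_m^{(k)}\in\{2,\,3\cdot 2^{m-2}\}$, so dependence forces $L_n^{(k)}$ to be $3$--smooth, and after invoking the classification of such terms with $n\ge k+1$ (the paper cites \cite{Bat} for this; your exact-formula argument cannot replace it, since it needs $n\le k$ too) one is left exactly with $m\ge k+1$. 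Hence both indices exceed $k$, the closed form $3\cdot2^{\ell-2}$ is unavailable, and your dichotomy never reduces the problem to finitely many $k$: infinitely many pairs $(k,n)$ remain untreated and the proof does not close. What is missing is an absolute bound on $k$, which is the paper's hardest step: for $k>1000$ one has $n<P(k)<2^{2k/5}$, so the quantitative approximation $\left|(L_\ell^{(k)})^x-(3\cdot 2^{\ell-2})^x\right|\ll (3\cdot2^{\ell-2})^x/2^{2k/5}$ (estimate (3.17), imported from \cite{GGL}) applies even for indices far beyond $k$; this turns the equation into a small linear form in $\log 2$ and $\log 3$, which a two--logarithm lower bound (Laurent--Mignotte--Nesterenko) caps at $k<7.7\cdot 10^8$ and a continued-fraction (Legendre) step then pushes to $k<933$, contradicting $k>1000$. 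Only after such an argument does ``the finitely many remaining small $k$'' make sense, and the LLL reduction plus final search can proceed as you describe.
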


\section{Methods}
\subsection{Preliminaries}
The following is a well known identity
\begin{align}\label{eq2.2}
	L_n^{(k)} = 3 \cdot 2^{n-2},\qquad \text{for all}\qquad 2 \le n \le k.
\end{align}
It can be easily proved by induction. The characteristic polynomial of $(L_n^{(k)})_{n\in {\mathbb Z}}$ is given by
\[
\Psi_k(x) = x^k - x^{k-1} - \cdots - x - 1.
\]
This polynomial is irreducible in $\mathbb{Q}[x]$. The polynomial $\Psi_k(x)$ possesses a unique real root $\alpha:=\alpha(k)>1$, see \cite{MIL}. All the other roots of $\Psi_k(x)$ are inside the unit circle,  see \cite{MILLER}. The particular root $\alpha$ can be found in the interval
\begin{align}\label{eq2.3}
2(1 - 2^{-k} ) < \alpha < 2,\qquad \text{for} \qquad k\ge 2,
\end{align}
as noted in \cite{WOL}. As in the classical case when $k=2$, it was shown in \cite{BRL} that 
\begin{align}\label{eq2.4}
	\alpha^{n-1} \le L_n^{(k)}\le2\alpha^n, \qquad \text{holds for all}\qquad n\ge1, \quad k\ge 2.
\end{align}
For $k\ge 2$ we define
\begin{align*}
	f_k(x):=\dfrac{x-1}{2+(k+1)(x-2)}.
\end{align*}
If $2(1 - 2^{-k} ) <  x < 2$, then $\frac{\partial}{\partial x}f_k(x)<0$ and so inequality \eqref{eq2.3} implies that
\begin{align}\label{eq2.5}
	\dfrac{1}{2}=f_k(2)<f_k(\alpha)<f_k(2(1 - 2^{-k} ))\le \dfrac{3}{4},
\end{align}
holds for all $k\ge 3$. The inequality $1/2=f_k(2)<f_k(\alpha)<3/4$ also holds for $k=2$. In addition, it is easy to verify that $|f_k(\alpha_i)|<1$ holds for all $2\le i\le k$, where $\alpha_i$ for $i=2,\ldots,k$ are the roots of $\Psi_k(x)$ inside the unit disk.
Lastly, it was shown in \cite{BRL} that
\begin{align}\label{eq2.6}
	L_n^{(k)}=\displaystyle\sum_{i=1}^{k}(2\alpha_i-1)f_k(\alpha_i)\alpha_i^{n-1}~~\text{and}~~\left|L_n^{(k)}-f_k(\alpha)(2\alpha-1)\alpha^{n-1}\right|<\dfrac{3}{2},
\end{align}
hold for all $k\ge 2$ and $n\ge 2-k$. This means that $L_n^{(k)}=f_k(\alpha)(2\alpha-1)\alpha^{n-1}+e_k(n)$, where $|e_k(n)|<1.5$. The first expression in \eqref{eq2.6} is known as the Binet-like formula for $L_n^{(k)}$. Furthermore, the second inequality expression in \eqref{eq2.6} shows that the contribution of the zeros that are inside the unit circle to $L_n^{(k)}$ is small. 
\subsection{Linear forms in logarithms}
We use Baker-type lower bounds for nonzero linear forms in logarithms of algebraic numbers. There are many such bounds mentioned in the literature but we use one of Matveev from \cite{MAT}. Before we can formulate such inequalities we need the notion of height of an algebraic number recalled below.  

\begin{definition}\label{def2.1}
	Let $ \gamma $ be an algebraic number of degree $ d $ with minimal primitive polynomial over the integers $$ a_{0}x^{d}+a_{1}x^{d-1}+\cdots+a_{d}=a_{0}\prod_{i=1}^{d}(x-\gamma^{(i)}), $$ where the leading coefficient $ a_{0} $ is positive. Then, the logarithmic height of $ \gamma$ is given by $$ h(\gamma):= \dfrac{1}{d}\Big(\log a_{0}+\sum_{i=1}^{d}\log \max\{|\gamma^{(i)}|,1\} \Big). $$
\end{definition}
In particular, if $ \gamma$ is a rational number represented as $\gamma=p/q$ with coprime integers $p$ and $ q\ge 1$, then $ h(\gamma ) = \log \max\{|p|, q\} $. 
The following properties of the logarithmic height function $ h(\cdot) $ will be used in the rest of the paper without further reference:
\begin{equation}\nonumber
	\begin{aligned}
		h(\gamma_{1}\pm\gamma_{2}) &\leq h(\gamma_{1})+h(\gamma_{2})+\log 2;\\
		h(\gamma_{1}\gamma_{2}^{\pm 1} ) &\leq h(\gamma_{1})+h(\gamma_{2});\\
		h(\gamma^{s}) &= |s|h(\gamma)  \quad {\text{\rm valid for}}\quad s\in \mathbb{Z}.
	\end{aligned}
\end{equation}
With these properties, it was easily computed in Section 3 of \cite{GoL} that
\begin{align}\label{eq2.7}
	h\left(f_k(\alpha)\right)<2\log k, \qquad \text{for all}\qquad k\ge 2.
\end{align}

A linear form in logarithms is an expression of the form
\begin{equation}
	\label{eq:Lambda}
	\Lambda:=b_1\log \gamma_1+\cdots+b_t\log \gamma_t,
\end{equation}
where $\gamma_1,\ldots,\gamma_t$ are positive real  algebraic numbers and $b_1,\ldots,b_t$ are nonzero integers. We assume, $\Lambda\ne 0$. We need lower bounds 
for $|\Lambda|$. We write ${\mathbb K}={\mathbb Q}(\gamma_1,\ldots,\gamma_t)$ and $D$ for the degree of ${\mathbb K}$.
We give the general form due to Matveev \cite{MAT}. 

\begin{theorem}[Matveev, \cite{MAT}]
	\label{thm:Mat} 
	Put $\Gamma:=\gamma_1^{b_1}\cdots \gamma_t^{b_t}-1=e^{\Lambda}-1$. Then 
	$$
	\log |\Gamma|>-1.4\cdot 30^{t+3}\cdot t^{4.5} \cdot D^2 (1+\log D)(1+\log B)A_1\cdots A_t,
	$$
	where $B\ge \max\{|b_1|,\ldots,|b_t|\}$ and $A_i\ge \max\{Dh(\gamma_i),|\log \gamma_i|,0.16\}$ for $i=1,\ldots,t$.
\end{theorem}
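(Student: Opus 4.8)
The plan is to prove this by Baker's method: assume for contradiction that $\Gamma \neq 0$ (which is guaranteed by $\Lambda \neq 0$) yet $\log|\Gamma|$ is smaller than the asserted bound, and derive a contradiction from the arithmetic nature of the $\gamma_i$. Write $C(t,D) := 1.4\cdot 30^{t+3}\, t^{4.5}\, D^2(1+\log D)$, so that the claim is $\log|\Gamma| > -C(t,D)(1+\log B)A_1\cdots A_t$. After normalizing so that $|b_t| = B$ and noting that $\Lambda = \log(1+\Gamma)$ with $|\Lambda| \asymp |\Gamma|$ when $|\Gamma|$ is small, the whole problem becomes one of showing that $|\Lambda|$ cannot be too small unless $\Lambda = 0$. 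The parameters $A_i \ge \max\{D h(\gamma_i), |\log\gamma_i|, 0.16\}$ and the degree $D = [\mathbb{K}:\mathbb{Q}]$ are precisely the quantities that will control the heights and the archimedean sizes appearing below.

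\textbf{Step 1 (auxiliary function).} Following Gel'fond and Baker, I would introduce the one-variable entire function
$$\Phi(z) = \sum_{\lambda_1=0}^{L}\cdots\sum_{\lambda_t=0}^{L} p(\lambda_1,\ldots,\lambda_t)\,\gamma_1^{\lambda_1 z}\cdots\gamma_t^{\lambda_t z},$$
with the range $L$ and a vanishing order $T$ chosen as explicit functions of $t, D, B$ and the $A_i$. Using $\gamma_t^{b_t} = (1+\Gamma)\prod_{i<t}\gamma_i^{-b_i}$ to eliminate $\log\gamma_t$ in favour of $\Lambda$, the restriction of $\Phi$ to integers is, up to a small error governed by $\Lambda$, an algebraic combination of powers of $\gamma_1,\ldots,\gamma_{t-1}$. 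By Siegel's lemma over $\mathbb{K}$ (the Thue-Siegel box principle) one selects coefficients $p(\lambda)\in\mathcal{O}_{\mathbb{K}}$, not all zero and of controlled height, so that $\Phi$ and all its derivatives of order below $T$ vanish at the integers $z = 0,1,\ldots,S$.

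\textbf{Step 2 (analytic extrapolation).} This is where the smallness hypothesis is exploited. On one hand, the maximum modulus principle together with a Schwarz-lemma estimate on a large disk shows that $\Phi$ and its low-order derivatives take very small values at further integers, the smallness being driven by the assumed upper bound on $|\Lambda|$. On the other hand, these values, after clearing denominators, are algebraic integers whose conjugates are bounded in terms of the $A_i$ and the parameters $L, T, S$. The fundamental Liouville inequality --- a nonzero algebraic integer of degree $\le D$ all of whose conjugates have absolute value $\le H$ satisfies $|\beta| \ge H^{-(D-1)}$ --- then forces these values to vanish. Hence $\Phi$ actually vanishes at more points, or to higher order, than was imposed in Step 1, and iterating this interpolation-and-extrapolation scheme geometrically enlarges the zero set.

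\textbf{Step 3 (zero estimate and optimization of constants).} After sufficiently many extrapolation rounds, $\Phi$ vanishes on a set too large to be compatible with a nonzero exponential polynomial of the prescribed degrees. A zero (multiplicity) estimate of Philippon-Waldschmidt / Masser-W\"ustholz type --- bounding the zeros in terms of $L$, $t$ and the dimension of the algebraic subgroup generated by $(\gamma_1,\ldots,\gamma_t)$ --- then forces every $p(\lambda)$ to vanish, contradicting the choice in Step 1, so that $\Lambda = 0$ after all. I expect the main obstacle to be this final step combined with the parameter optimization that produces Matveev's comparatively mild dependence $30^{t+3}t^{4.5}$ on the number of logarithms: obtaining this, rather than the exponentially worse constants of cruder arguments, requires a sharp multiplicity estimate together with an essentially inductive balancing of $L$, $T$, $S$ and the intermediate height bounds. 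By contrast, propagating the factors $D^2(1+\log D)$ (arising from Siegel's lemma and the degree of $\mathbb{K}$) and $(1+\log B)$ (arising from the height of the coefficients forced by eliminating $\log\gamma_t$) is delicate bookkeeping but conceptually routine.
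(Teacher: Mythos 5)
There is nothing in the paper to compare your attempt against: Theorem \ref{thm:Mat} is quoted as a black box from Matveev's paper \cite{MAT}, and the authors (entirely standardly) give no proof of it --- it is a deep external input, on the same footing as Theorem \ref{thm:LMN}. So the only question is whether your submission stands on its own as a proof, and it does not. What you have written is a correct \emph{genre description} of the Gel'fond--Baker method --- auxiliary function via Siegel's lemma, Schwarz-lemma extrapolation against the Liouville inequality, a terminal multiplicity estimate --- but every quantitative choice is deferred: $L$, $T$, $S$ are ``chosen as explicit functions of $t,D,B$ and the $A_i$'' without being chosen, the extrapolation is not carried out even once, and Step 3 openly postpones the zero estimate and the optimization. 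The entire content of the statement is the explicit constant $1.4\cdot 30^{t+3}t^{4.5}D^2(1+\log D)$ together with the mild $(1+\log B)$ dependence; a sketch whose final sentence identifies producing that constant as ``the main obstacle'' has proved nothing beyond what was already folklore. This is a genuine gap, not bookkeeping: Matveev's two Izvestiya papers exist precisely because extracting such constants from the scheme you outline requires new ideas (Fel'dman's binomial polynomials in place of pure powers $\gamma_i^{\lambda_i z}$, a special system of generators of the multiplicative group to avoid the Kummer-descent losses), none of which appear in your plan.

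There is also a structural error in Step 1: your auxiliary function is a \emph{one-variable} exponential polynomial, with all $t$ logarithms entering through the single variable $z$. The classical Baker method for $t$ logarithms requires an auxiliary function of several complex variables (typically $\Phi(z_1,\ldots,z_{t-1})$, with the linear form used to eliminate one variable and extrapolation performed on the diagonal); a one-variable construction of the kind you write down supports the argument only for $t=2$, which is exactly the case covered by the sharper two-logarithm Theorem \ref{thm:LMN} of Laurent--Mignotte--Nesterenko --- and even that is proved by interpolation determinants rather than by your scheme. Two smaller points: the theorem bounds $\log|\Gamma|$, not $\log|\Lambda|$, so before invoking $|\Lambda|\asymp|\Gamma|$ you must dispose of the regime $|\Gamma|\ge 1/2$ (trivial, but it cannot be skipped when the claim is a fully explicit inequality); and your closing ``so that $\Lambda=0$ after all'' inverts the logic of the statement as used in the paper, where $\Lambda\ne 0$ is a hypothesis verified separately (via Lemmas \ref{lemGGL} and \ref{lemGL}) and the conclusion is the lower bound, with the contradiction landing on the smallness assumption.
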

In our application of Matveev's result (Theorem \ref{thm:Mat}), we need to ensure that the linear forms in logarithms are indeed nonzero. To ensure this, we shall need the following two results given as Lemma 2.7 and Lemma 2.8 in \cite{GGL}.
\begin{lemma}[Lemma 2.7 in \cite{GGL}]\label{lemGGL}
	The only solution in positive integers \( k \geq 2 \), \( x \), \( y \) of the Diophantine equation
	\[
	(2^{k+1} - 3)^x = \left(\frac{2^{k+1}k^k - (k + 1)^{k+1}}{(k - 1)^2}\right)^y
	\]
	is \( k = 2 \), for which \( x = y \).
	
\end{lemma}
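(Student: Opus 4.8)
```latex
\textbf{Proof proposal.}
The plan is to show that the two sides of the equation are forced to be equal by their dominant asymptotics, and then to derive a contradiction for $k\ge 3$ by examining the finer structure (for instance prime factorizations or $2$-adic valuations) of the two bases, which turn out to be incompatible unless $k=2$. The first step is to estimate the growth rates of the two bases in $k$. Writing $A:=2^{k+1}-3$ and $B:=\bigl(2^{k+1}k^k-(k+1)^{k+1}\bigr)/(k-1)^2$, I would observe that $\log A=(k+1)\log 2+O(2^{-k})$, while the numerator of $B$ is dominated by the term $2^{k+1}k^k$, so that $\log B=(k+1)\log 2+k\log k-2\log(k-1)+o(1)$. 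Thus $\log B=\log A+k\log k+O(\log k)$, and in particular $B>A$ with $\log B/\log A\to\infty$ as $k\to\infty$. Taking logarithms in $A^x=B^y$ gives $x\log A=y\log B$, so that $x/y=\log B/\log A$, which is bounded away from $1$ for all large $k$; this shows the solution cannot have $x=y$ once $k$ is large, and forces $x>y$.

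Next I would exploit arithmetic rather than size. The cleanest route is via prime factorizations: since $A^x=B^y$, the sets of prime divisors of $A$ and of $B$ must coincide, and for each prime $p$ one needs $x\,v_p(A)=y\,v_p(B)$, where $v_p$ denotes the $p$-adic valuation. The key observation is that $A=2^{k+1}-3$ is odd, so $2\nmid A$, whence one must also have $2\nmid B$. Computing $v_2$ of the numerator $2^{k+1}k^k-(k+1)^{k+1}$ and of the denominator $(k-1)^2$ and showing that $v_2(B)>0$ for all $k\ge 3$ (while $v_2(B)=0$ can only occur in small exceptional cases) would immediately yield a contradiction, since $A$ contributes no factor of $2$ but $B$ does. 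I would carry out this valuation analysis by splitting into the parities of $k$: the parity of $k$ controls the $2$-adic behaviour of $k^k$, $(k+1)^{k+1}$, and $(k-1)^2$, and a short case check should reveal that $B$ is even for every $k\ge 3$.

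The main obstacle I anticipate is twofold. First, the denominator $(k-1)^2$ complicates the valuation bookkeeping, so I must confirm that $B$ is genuinely an integer (or at least track valuations carefully as a rational number) before concluding anything about prime factors; this requires verifying that $(k-1)^2$ divides $2^{k+1}k^k-(k+1)^{k+1}$, or else arguing directly with $v_p$ of a rational. Second, the clean parity contradiction may fail to cover a few small values of $k$, where $B$ could be odd or where the leading-order estimates are too crude to separate $\log A$ from $\log B$; these residual cases (plausibly $k\in\{2,3,4\}$ and small $x,y$) I would settle by direct computation, checking that $A^x=B^y$ holds only for $k=2$ with $x=y$. If the $2$-adic argument proves too delicate, an alternative is to choose an odd prime $p\mid A$ and show that $p$ cannot divide $B$ to a compatible multiplicity; but I expect the parity obstruction at $p=2$ to be the most transparent and the one I would pursue first.
```
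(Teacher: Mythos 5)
You should first note that the paper does not actually prove this lemma; it is imported verbatim from \cite{GGL} (Lemma 2.7 there) and used as a black box, so there is no internal proof to compare against. Your proposal therefore has to stand on its own, and it does not: its central arithmetic claim is false for half of all $k$.

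The core of your argument is the assertion that $B:=\bigl(2^{k+1}k^k-(k+1)^{k+1}\bigr)/(k-1)^2$ is even for every $k\ge 3$, which would contradict $A:=2^{k+1}-3$ being odd. This holds only when $k$ is odd (then $v_2\bigl(2^{k+1}k^k\bigr)=k+1$, $v_2\bigl((k+1)^{k+1}\bigr)\ge k+1$, and $v_2\bigl((k-1)^2\bigr)=2v_2(k-1)<k+1$, so indeed $v_2(B)>0$). When $k$ is even, however, $2^{k+1}k^k$ is even while $(k+1)^{k+1}$ and $(k-1)^2$ are both odd, so the numerator is odd, the denominator is odd, and $v_2(B)=0$: the parity obstruction evaporates. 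Concretely, for $k=4$ one has $A=29$ and $B=(2^5\cdot 4^4-5^5)/9=5067/9=563$, both odd. So the failure is not confined to ``a few small values of $k$'' as you hedge; it occurs for the entire infinite family of even $k$, and your fallback (choose an odd prime $p\mid A$ and show its multiplicity in $B$ is incompatible) is not carried out and cannot be made uniform in any obvious way, since nothing is controlled about the factorization of $2^{k+1}-3$ in general --- for instance $2^{7}-3=125=5^3$ is even a perfect power, so arguments assuming $A$ has a ``clean'' prime structure will not go through. Note also that your size estimates only force $x>y$, which is perfectly consistent with the equation and yields no contradiction by itself. The even-$k$ case is thus a genuine gap, and it is precisely the hard part of the lemma.
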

\begin{lemma}[Lemma 2.8 in \cite{GGL}]\label{lemGL}
Let \( N := N_{\mathbb{K}/\mathbb{Q}}, \) where \( \mathbb{K} = \mathbb{Q}(\alpha) \). Then
\begin{enumerate}[(i)]
	\item For \( n, m \geq 1 \) and \( k \geq 2 \), \( |N(\alpha)| = 1 \).
	\item \( N(2\alpha - 1) = 2^{k+1} - 3 \) and \( N(f_k(\alpha)) = (k - 1)^2 / (2^{k+1}k^k - (k + 1)^{k+1}) \).
	\item For \( k \geq 3 \), \( N((2\alpha - 1)f_k(\alpha)) < 1 \).
\end{enumerate}	
\end{lemma}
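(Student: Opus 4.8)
The plan is to exploit that $\Psi_k(x)$ is monic and irreducible of degree $k$, so that $\mathbb{K}=\mathbb{Q}(\alpha)$ has degree $k$ and the full conjugate set of $\alpha$ is exactly $\{\alpha=\alpha_1,\alpha_2,\ldots,\alpha_k\}$, the roots of $\Psi_k$. Consequently, for any $\beta=g(\alpha)$ with $g\in\mathbb{Q}(x)$ one has $N(\beta)=\prod_{i=1}^{k}g(\alpha_i)$, and the single computational tool I will use throughout is the factorization $\prod_{i=1}^{k}(c-\alpha_i)=\Psi_k(c)$, valid for every constant $c$. Part (i) then falls out at once: $N(\alpha)=\prod_{i=1}^{k}\alpha_i=(-1)^k\prod_{i=1}^k(0-\alpha_i)=(-1)^k\Psi_k(0)=(-1)^k(-1)=(-1)^{k+1}$, so $|N(\alpha)|=1$ and $\alpha$ is an algebraic unit (the statement does not in fact depend on $m,n$).

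For part (ii) I would evaluate $\Psi_k$ at well-chosen rational points. Writing $2\alpha_i-1=-2(\tfrac12-\alpha_i)$ gives $N(2\alpha-1)=(-2)^k\Psi_k(\tfrac12)$, and summing the geometric series yields $\Psi_k(\tfrac12)=(3-2^{k+1})/2^k$, hence $N(2\alpha-1)=(-1)^k(3-2^{k+1})$, of absolute value $2^{k+1}-3$. For $f_k$ I first rewrite the denominator as $2+(k+1)(x-2)=(k+1)x-2k$, so that $f_k(x)=(x-1)/((k+1)x-2k)$ and $N(f_k(\alpha))=\prod(\alpha_i-1)\big/\prod((k+1)\alpha_i-2k)$. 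The numerator equals $(-1)^k\Psi_k(1)=(-1)^k(1-k)$, while the denominator equals $(k+1)^k(-1)^k\Psi_k\!\big(\tfrac{2k}{k+1}\big)$. Using the closed form $\Psi_k(r)=(r^{k+1}-2r^k+1)/(r-1)$ and substituting $r=2k/(k+1)$ (so that $r-1=(k-1)/(k+1)$ and $r^{k+1}-2r^k+1=((k+1)^{k+1}-2^{k+1}k^k)/(k+1)^{k+1}$) makes everything collapse to $N(f_k(\alpha))=(k-1)^2/(2^{k+1}k^k-(k+1)^{k+1})$, which is the stated value.

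Part (iii) then follows by multiplicativity of the norm together with part (ii): $|N((2\alpha-1)f_k(\alpha))|=(2^{k+1}-3)(k-1)^2/(2^{k+1}k^k-(k+1)^{k+1})$. This quantity equals exactly $1$ when $k=2$, which is precisely why the claim is restricted to $k\ge3$, so what remains is the strict inequality $(2^{k+1}-3)(k-1)^2<2^{k+1}k^k-(k+1)^{k+1}$ for $k\ge3$. I would verify $k=3$ directly ($52<176$); for $k\ge4$ I would bound $(k+1)^{k+1}=\tfrac{k+1}{2^{k+1}}(1+1/k)^k\cdot 2^{k+1}k^k<\tfrac12\,2^{k+1}k^k$, using $(1+1/k)^k<e$ and $3(k+1)/2^{k+1}<\tfrac12$, so that the right-hand side exceeds $\tfrac12\,2^{k+1}k^k$; since $k^{k-2}>2$ this already dominates $2^{k+1}k^2>(2^{k+1}-3)(k-1)^2$.

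The only step with genuine content beyond Vieta bookkeeping is the closing inequality in part (iii): it is tight at $k=2$ and the naive bounds are still too weak at $k=3$, so both small cases must be treated separately before the super-exponential growth of $2^{k+1}k^k$ against the merely polynomial $2^{k+1}(k-1)^2$ takes over. A secondary point to keep straight is the sign bookkeeping: the honest norms carry factors $(-1)^{k+1}$ and $(-1)^k(3-2^{k+1})$, so the formulas in (ii) are to be read as magnitudes (equivalently, after fixing a sign convention), which is all that the downstream nonvanishing arguments require.
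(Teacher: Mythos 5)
Your proof is correct, and it is worth noting that the paper itself offers no proof of this lemma: it is quoted verbatim as Lemma 2.8 of \cite{GGL}, so your argument supplies the verification that the paper delegates to a citation, and it is in substance the standard one underlying that source. Since $\Psi_k$ is monic and irreducible of degree $k$, one has $N(g(\alpha))=\prod_{i=1}^k g(\alpha_i)$ for any rational function $g$ defined at all the conjugates, and everything reduces to the single identity $\prod_{i=1}^k(c-\alpha_i)=\Psi_k(c)$, evaluated at $c=0,\ 1/2,\ 1,\ 2k/(k+1)$ via the closed form $\Psi_k(x)=(x^{k+1}-2x^k+1)/(x-1)$. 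All your evaluations check out: $\Psi_k(1/2)=(3-2^{k+1})/2^k$, the numerator $\prod_i(\alpha_i-1)=(-1)^k(1-k)$, and the collapse of $N(f_k(\alpha))$ to exactly $(k-1)^2/\bigl(2^{k+1}k^k-(k+1)^{k+1}\bigr)$ (correct even in sign, since $2^{k+1}k^k>(k+1)^{k+1}$ for $k\ge 2$). The closing inequality is also handled properly: equality at $k=2$ (indeed $5\cdot 1=32-27$, which explains the hypothesis $k\ge 3$), the direct check $52<176$ at $k=3$ where the asymptotic bound is still too weak, and for $k\ge 4$ the estimate $(k+1)^{k+1}=\tfrac{k+1}{2^{k+1}}(1+1/k)^k\cdot 2^{k+1}k^k<\tfrac12\cdot 2^{k+1}k^k$ using $(1+1/k)^k<e$ and $3(k+1)/2^{k+1}<\tfrac12$, after which $k^{k-2}>2$ finishes the domination of $(2^{k+1}-3)(k-1)^2$. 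Finally, your sign caveat is a genuine and correct observation rather than a defect: the literal equality $N(2\alpha-1)=2^{k+1}-3$ fails for even $k$ (e.g.\ for $k=2$ one computes $(2\alpha_1-1)(2\alpha_2-1)=-5$), the honest value being $(-1)^{k+1}(2^{k+1}-3)$; since the paper only ever invokes these norms through absolute values (as in its nonvanishing argument for $\Lambda$, which works with $|N(f_k(\alpha))|$ and $|N(2\alpha-1)|$), reading item (ii) up to sign, as you do, is exactly what the downstream applications require, and your stronger conclusion $|N((2\alpha-1)f_k(\alpha))|<1$ yields item (iii) for both parities of $k$.
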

For the special case when $t=2$ in Theorem \ref{thm:Mat}, we instead apply a sharper result from \cite{LMN}. Suppose that $A_1>1,~A_2>1$ are real numbers such that 
\begin{equation}
	\label{eq:Ai}
	\log A_i\ge \max\left\{h(\gamma_i),\frac{|\log \gamma_i|}{D},\frac{1}{D}\right\}\quad {\text{\rm for}}\quad i=1,2.
\end{equation}
Put
$$
b':=\frac{|b_1|}{D\log A_2}+\frac{|b_2|}{D\log A_1}.
$$
The following  result is Corollary 2 in \cite{LMN}. 
\begin{theorem}[Laurent et al, 1995]
	\label{thm:LMN}
	In case $t=2$, we have  
	$$
	\log |\Lambda|\ge -24.34 D^4\left(\max\left\{\log b'+0.14,\frac{21}{D},\frac{1}{2}\right\}\right)^2\log A_1\log A_2.
	$$
\end{theorem}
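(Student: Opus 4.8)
This is a deep result of transcendence theory, not a combinatorial identity internal to the paper: Theorem~\ref{thm:LMN} is the sharp Laurent--Mignotte--Nesterenko lower bound for a linear form in two logarithms, which we invoke only as an external tool (it is Corollary~2 of \cite{LMN}). I would therefore treat it as a black box and merely indicate the route by which a bound of this precise shape is obtained, namely Laurent's \emph{interpolation-determinant} method, which is exactly what the title of \cite{LMN} refers to.

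The plan is to argue by contradiction. Assume that $|\Lambda|$ is strictly smaller than the asserted bound, where $\Lambda=b_1\log\gamma_1+b_2\log\gamma_2$; equivalently, the two fixed logarithms satisfy $\log\gamma_1/\log\gamma_2\approx -b_2/b_1$ to very high accuracy. First I would introduce two integer parameters $L,T\ge 1$, to be optimised only at the very end, and build a single interpolation determinant
$$
\Delta=\det\left(\left(\frac{d}{dz}\right)^{s_u}\!\left[\,z^{\,r_u}\,\gamma_1^{\,\ell_u z}\,\right]\Big|_{z=\zeta_v}\right)_{1\le u,v\le N},\qquad N=LT ,
$$
whose rows run over a block of monomials $z^{r}\gamma_1^{\ell z}$ together with their first $T-1$ derivatives, and whose columns are evaluations at grid points $\zeta_v$ formed from the integers $b_1,b_2$. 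Since every entry lies in $\mathbb{K}=\mathbb{Q}(\gamma_1,\gamma_2)$, we have $\Delta\in\mathbb{K}$, and the whole argument reduces to comparing an analytic upper bound with an arithmetic lower bound for the one quantity $|\Delta|$.

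For the analytic upper bound I would view $\Delta$ as the value of an entire function and apply Schwarz's lemma on a disc whose radius is tailored to $L$ and $T$: because $\Lambda$ is assumed tiny, the functions $\gamma_1^{\ell z}$ and $\gamma_2^{\ell z}$ are nearly proportional on the grid, the columns of the matrix are almost linearly dependent, and $\log|\Delta|$ is forced above by a large negative quantity proportional to $N\log(1/|\Lambda|)$ (up to a term of size $O(N^2)$ from the growth of the entries). For the arithmetic lower bound I would clear denominators so that a controlled integer multiple of $\Delta$ is an algebraic integer, bound the house and the denominator of $\Delta$ in terms of $N$, $D$, and the heights $h(\gamma_i)$ (which enter through $\log A_1,\log A_2$), and invoke the Liouville/size inequality: provided $\Delta\neq 0$ one gets $\log|\Delta|\ge -c\,D\,(\cdots)\log A_1\log A_2$. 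The nonvanishing $\Delta\neq 0$ is the delicate \emph{zero-estimate} step, and it is where multiplicative independence is really used, since a vanishing determinant would force an exact relation $\gamma_1^{p}\gamma_2^{q}=1$; the multiplicatively dependent case has to be treated separately by a one-variable Baker--Fel'dman estimate, because there $\Lambda$ collapses to a single logarithm.

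Comparing the two bounds and then choosing $L$ and $T$ optimally produces a lower bound for $\log|\Lambda|$ of exactly the stated form. The main obstacle --- and the reason the theorem is quoted rather than reproved here --- is precisely this final optimisation: extracting the clean numerical constant $24.34$, the shift $\log b'+0.14$, and the thresholds $21/D$ and $1/2$ requires balancing the analytic and arithmetic estimates with considerable care, together with Fel'dman-type control of the greatest common divisors of the binomial coefficients arising from the derivatives. For the purposes of this paper I would not reproduce any of that, but simply apply Theorem~\ref{thm:LMN} in the form established in \cite{LMN}.
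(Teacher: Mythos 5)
Your treatment matches the paper exactly: Theorem~\ref{thm:LMN} is not proved in the paper but simply quoted as Corollary~2 of \cite{LMN} and used as an external tool, which is precisely what you propose. Your sketch of Laurent's interpolation-determinant method (analytic upper bound via Schwarz's lemma versus an arithmetic Liouville-type lower bound, with a zero estimate for nonvanishing) is a fair high-level account of the source's argument, though none of it is needed or reproduced in the paper.
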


However, during the calculations, upper bounds on the variables which are too large are obtained, thus there is need to reduce them. In this paper we use the following result related with continued fractions (see Theorem 8.2.4 in \cite{ME}).

\begin{lemma}[Legendre]\label{lem:Leg} Let $ \tau $ be an irrational number, $[a_0,a_1,a_2,\ldots]$ be the continued fraction expansion of $\tau$. Let $p_i/q_i=[a_0,a_1,a_2,\ldots,a_i]$, for all $i\ge 0$, be all the convergents of the continued fraction of $ \tau$, and $M$ be a positive integer. Let $ N $ be a non-negative integer such that
	$ q_{N} > M $.
	Then putting $ a(M) := \max \{a_{i}: i=0,1,2,\ldots,N   \}$, the inequality
	$$ \bigg| \tau-\frac{r}{s}   \bigg| > \dfrac{1}{(a(M)+2)s^2},  $$
	holds for all pairs $ (r, s) $ of positive integers with $ 0 < s < M $. 
\end{lemma}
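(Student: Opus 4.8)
The plan is to prove the statement by the classical theory of continued fractions, and in particular by exploiting the fact that the convergents $p_i/q_i$ are the \emph{best rational approximations of the second kind} to $\tau$. The governing idea is to sandwich the denominator $s$ between two consecutive convergent denominators and to keep track of exactly which partial quotient controls the quality of approximation there.

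First I would fix a pair $(r,s)$ with $0<s<M$ and select the unique index $i\ge 0$ for which $q_i\le s<q_{i+1}$. Such an $i$ exists because $q_0=1\le s$ (as $s\ge 1$) and because $q_N>M>s$ forces $s<q_N$; the same chain shows $i+1\le N$. This is precisely why the definition of $a(M)$ ranges over the quotients $a_0,\dots,a_N$: from $i+1\le N$ we obtain $a_{i+1}\le a(M)$, and it is $a_{i+1}$ that will surface in the final bound.

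Next I would invoke the law of best approximation: for $q_i\le s<q_{i+1}$ one has $|s\tau-r|\ge|q_i\tau-p_i|$ for every integer $r$. I would then combine this with the exact identity $|q_i\tau-p_i|=1/(\alpha_{i+1}q_i+q_{i-1})$, where $\alpha_{i+1}=[a_{i+1};a_{i+2},\dots]$ is the complete quotient, together with the elementary estimates $\alpha_{i+1}<a_{i+1}+1$ and $q_{i-1}<q_i$, to conclude
$$
|q_i\tau-p_i|>\frac{1}{(a_{i+1}+1)q_i+q_{i-1}}>\frac{1}{(a_{i+1}+2)q_i}.
$$
Dividing by $s$ and using $q_i\le s$ then yields
$$
\Bigl|\tau-\frac{r}{s}\Bigr|=\frac{|s\tau-r|}{s}>\frac{1}{(a_{i+1}+2)q_i s}\ge\frac{1}{(a_{i+1}+2)s^2}\ge\frac{1}{(a(M)+2)s^2},
$$
which is the asserted inequality. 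The identity for $|q_i\tau-p_i|$ and the two estimates are routine, following at once from the recurrences $q_{i+1}=a_{i+1}q_i+q_{i-1}$ and $p_iq_{i-1}-p_{i-1}q_i=(-1)^{i-1}$.

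The main obstacle is the best-approximation inequality $|s\tau-r|\ge|q_i\tau-p_i|$ itself, which has to hold for \emph{every} denominator in the range $q_i\le s<q_{i+1}$, not merely for $s=q_i$. A naive argument via the triangle inequality $|\tau-r/s|\ge|p_i/q_i-r/s|-|\tau-p_i/q_i|$ is too weak once $s$ is close to $q_{i+1}$ (for instance for the intermediate fractions $(p_{i-1}+tp_i)/(q_{i-1}+tq_i)$ with $1\le t\le a_{i+1}-1$), so the best-approximation property is genuinely needed. I would justify it by taking, among all fractions with denominator at most $s$, one that minimizes $|Q\tau-P|$, noting that any such minimizer is a best approximation of the second kind and hence a convergent, and observing that no convergent has denominator in the open interval $(q_i,q_{i+1})$; therefore the minimum is attained at $q_i$. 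A small amount of extra care is needed at the boundary index $i=0$, where $q_{-1}=0$, but the same chain of inequalities goes through without change.
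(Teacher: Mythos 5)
Your proof is correct, but note that the paper itself offers no proof of this lemma: it is quoted verbatim as a known result (Theorem 8.2.4 in the book of Murty and Esmonde, reference \cite{ME}), so there is no internal argument to compare against. What you have written is essentially the standard textbook derivation: locate $s$ via $q_i\le s<q_{i+1}$, invoke the best-approximation-of-the-second-kind property $|s\tau-r|\ge|q_i\tau-p_i|$, and use the exact formula $|q_i\tau-p_i|=1/(\alpha_{i+1}q_i+q_{i-1})$ with $\alpha_{i+1}<a_{i+1}+1$. Your bookkeeping of why $a(M)$ ranges over $a_0,\ldots,a_N$ is right: from $q_i\le s<M<q_N$ and the monotonicity of the $q_j$ one gets $i+1\le N$, hence $a_{i+1}\le a(M)$, which is exactly where the hypothesis $q_N>M$ enters. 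You also correctly identify that the naive triangle-inequality argument fails near intermediate fractions and that the second-kind best-approximation theorem is genuinely needed; your justification via a minimizer of $|Q\tau-P|$ (which is a convergent, and no convergent denominator lies strictly between $q_i$ and $q_{i+1}$) is sound, provided you add that $|q_j\tau-p_j|$ decreases in $j$ so the minimum over admissible convergents is attained at $j=i$, and that ties in $|Q\tau-P|$ cannot occur since $\tau$ is irrational. Two small touch-ups: $q_{i-1}<q_i$ should be $q_{i-1}\le q_i$ (equality is possible at $i=1$ when $a_1=1$), but the strict final inequality survives because $\alpha_{i+1}<a_{i+1}+1$ is strict; and in the equality case $(r,s)=(p_i,q_i)$ of the best-approximation step the same strictness again rescues the claimed strict bound, as you implicitly use.
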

However, since there are no methods based on continued fractions to find a lower bound for linear forms in more than two variables with bounded integer coefficients, we use at some point a method based on the LLL--algorithm. We next explain this method.

Let $k$ be a positive integer. A subset $\mathcal{L}$ of the $k$--dimensional real vector space ${ \mathbb{R}^k}$ is called a lattice if there exists a basis $\{b_1, b_2, \ldots, b_k \}$ of $\mathbb{R}^k$ such that
\begin{align*}
	\mathcal{L} = \sum_{i=1}^{k} \mathbb{Z} b_i = \left\{ \sum_{i=1}^{k} r_i b_i \mid r_i \in \mathbb{Z} \right\}.
\end{align*}
We say that $b_1, b_2, \ldots, b_k$ form a basis for $\mathcal{L}$, or that they span $\mathcal{L}$. We
call $k$ the rank of $ \mathcal{L}$. The determinant $\text{det}(\mathcal{L})$, of $\mathcal{L}$ is defined by
\begin{align*}
	\text{det}(\mathcal{L}) = | \det(b_1, b_2, \ldots, b_k) |,
\end{align*}
with the $b_i$'s being written as column vectors. This is a positive real number that does not depend on the choice of the basis (see \cite{Cas}, Section 1.2).

Given linearly independent vectors $b_1, b_2, \ldots, b_k $ in $ \mathbb{R}^k$, we refer back to the Gram--Schmidt orthogonalization technique. This method allows us to inductively define vectors $b^*_i$ (with $1 \leq i \leq k$) and real coefficients $\mu_{i,j}$ (for $1 \leq j \leq i \leq k$). Specifically,
\begin{align*}
	b^*_i &= b_i - \sum_{j=1}^{i-1} \mu_{i,j} b^*_j,~~~
	\mu_{i,j} = \dfrac{\langle b_i, b^*_j\rangle }{\langle b^*_j, b^*_j\rangle},
\end{align*}
where \( \langle \cdot , \cdot \rangle \)  denotes the ordinary inner product on \( \mathbb{R}^k \). Notice that \( b^*_i \) is the orthogonal projection of \( b_i \) on the orthogonal complement of the span of \( b_1, \ldots, b_{i-1} \), and that \( \mathbb{R}b_i \) is orthogonal to the span of \( b^*_1, \ldots, b^*_{i-1} \) for \( 1 \leq i \leq k \). It follows that \( b^*_1, b^*_2, \ldots, b^*_k \) is an orthogonal basis of \( \mathbb{R}^k \). 
\begin{definition}
	The basis $b_1, b_2, \ldots, b_n$ for the lattice $\mathcal{L}$ is called reduced if
	\begin{align*}
		\| \mu_{i,j} \| &\leq \frac{1}{2}, \quad \text{for} \quad 1 \leq j < i \leq n,~~
		\text{and}\\
		\|b^*_{i}+\mu_{i,i-1} b^*_{i-1}\|^2 &\geq \frac{3}{4}\|b^*_{i-1}\|^2, \quad \text{for} \quad 1 < i \leq n,
	\end{align*}
	where $ \| \cdot \| $ denotes the ordinary Euclidean length. The constant $ {3}/{4}$ above is arbitrarily chosen, and may be replaced by any fixed real number $ y $ in the interval ${1}/{4} < y < 1$ (see \cite{LLL}, Section 1).
\end{definition}
Let $\mathcal{L}\subseteq\mathbb{R}^k$ be a $k-$dimensional lattice  with reduced basis $b_1,\ldots,b_k$ and denote by $B$ the matrix with columns $b_1,\ldots,b_k$. 
We define
\[
l\left( \mathcal{L},y\right)= \left\{ \begin{array}{c}
	\min_{x\in \mathcal{L}}||x-y|| \quad  ;~~ y\not\in \mathcal{L}\\
	\min_{0\ne x\in \mathcal{L}}||x|| \quad  ;~~ y\in \mathcal{L}
\end{array}
\right.,
\]
where $||\cdot||$ denotes the Euclidean norm on $\mathbb{R}^k$. It is well known that, by applying the
LLL--algorithm, it is possible to give in polynomial time a lower bound for $l\left( \mathcal{L},y\right)$, namely a positive constant $c_1$ such that $l\left(\mathcal{L},y\right)\ge c_1$ holds (see \cite{SMA}, Section V.4).
\begin{lemma}\label{lem2.5}
	Let $y\in\mathbb{R}^k$ and $z=B^{-1}y$ with $z=(z_1,\ldots,z_k)^T$. Furthermore, 
	\begin{enumerate}[(i)]
		\item if $y\not \in \mathcal{L}$, let $i_0$ be the largest index such that $z_{i_0}\ne 0$ and put $\lambda:=\{z_{i_0}\}$, where $\{\cdot\}$ denotes the distance to the nearest integer.
		\item if $y\in \mathcal{L}$, put $\lambda:=1$.
	\end{enumerate}
Finally, we set 
	\[
	c_1:=\max\limits_{1\le j\le k}\left\{\dfrac{||b_1||}{||b_j^*||}\right\}\qquad\text{and}\qquad	
	 \delta:=\lambda\dfrac{||b_1||}{c_1}.
	\]
\end{lemma}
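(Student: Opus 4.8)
The statement to be proved is the lower bound $l(\mathcal{L},y)\ge\delta$ that this construction is designed to furnish; explicitly, $\|x-y\|\ge\delta$ for every $x\in\mathcal{L}$ in the case $y\notin\mathcal{L}$, and $\|x\|\ge\delta$ for every nonzero $x\in\mathcal{L}$ in the case $y\in\mathcal{L}$. The plan is to argue entirely in the Gram--Schmidt orthogonal basis $b_1^*,\ldots,b_k^*$, using orthogonality to isolate one coordinate that is provably bounded away from zero. I would begin by rewriting $\delta$ in a more transparent form: since $c_1=\max_{1\le j\le k}\|b_1\|/\|b_j^*\|=\|b_1\|/\min_{1\le j\le k}\|b_j^*\|$, we have $\|b_1\|/c_1=\min_{1\le j\le k}\|b_j^*\|$, and therefore $\delta=\lambda\,\min_{1\le j\le k}\|b_j^*\|$. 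Thus it suffices to bound the relevant distance below by $\lambda$ times the length of the shortest Gram--Schmidt vector.

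Next I would expand an arbitrary lattice point $x=\sum_{i=1}^k n_i b_i$ with $n_i\in\mathbb{Z}$. Since $y=Bz=\sum_{i=1}^k z_i b_i$, we get $x-y=\sum_{i=1}^k (n_i-z_i)b_i$, and substituting the Gram--Schmidt relations $b_i=b_i^*+\sum_{j<i}\mu_{i,j}b_j^*$ rewrites this as $x-y=\sum_{j=1}^k c_j b_j^*$ in the orthogonal basis. The decisive observation is that if $i_1$ denotes the largest index with $n_{i_1}\ne z_{i_1}$, then the coefficient $c_{i_1}$ equals exactly $n_{i_1}-z_{i_1}$: every $b_i$ with $i>i_1$ contributes nothing because $n_i-z_i=0$, so the only contribution to $b_{i_1}^*$ comes from the $i=i_1$ term with coefficient $1$. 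Orthogonality of the $b_j^*$ then gives $\|x-y\|^2=\sum_{j}c_j^2\|b_j^*\|^2\ge (n_{i_1}-z_{i_1})^2\|b_{i_1}^*\|^2$, reducing the whole problem to a lower bound on $|n_{i_1}-z_{i_1}|$.

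I would then close the argument by a short case analysis comparing $i_1$ with $i_0$. If $y\in\mathcal{L}$ I set $\lambda=1$ and run the same computation on a nonzero $x$ in place of $x-y$: here $i_1$ is the top nonzero index of $x$, $|n_{i_1}|\ge1$, and so $\|x\|\ge\|b_{i_1}^*\|\ge\min_j\|b_j^*\|=\delta$. If $y\notin\mathcal{L}$, then when $i_1>i_0$ maximality of $i_0$ forces $z_{i_1}=0$, hence $|n_{i_1}-z_{i_1}|=|n_{i_1}|\ge1$ and $\|x-y\|\ge\min_j\|b_j^*\|\ge\lambda\min_j\|b_j^*\|=\delta$ since $\lambda\le\tfrac12$; when $i_1=i_0$, the integrality of $n_{i_0}$ gives $|n_{i_0}-z_{i_0}|\ge\{z_{i_0}\}=\lambda$, whence $\|x-y\|\ge\lambda\|b_{i_0}^*\|\ge\delta$; and the remaining possibility $i_1<i_0$ forces $z_{i_0}=n_{i_0}\in\mathbb{Z}$, so $\lambda=0$, $\delta=0$, and the inequality is vacuous. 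Minimizing over $x$ then yields $l(\mathcal{L},y)\ge\delta$.

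The only genuinely delicate step is the bookkeeping in the second paragraph: I must make sure that the change of basis into the $b_j^*$ does not introduce, from the lower-indexed vectors $b_j^*$ with $j<i_1$, any cancellation of the leading coordinate, and that no term with index larger than $i_1$ perturbs $c_{i_1}$. This is precisely why singling out the \emph{largest} index $i_1$ with $n_{i_1}\ne z_{i_1}$ is essential, and why orthogonality of the Gram--Schmidt vectors---rather than any feature specific to LLL-reduction---is what does the work. It is worth noting that reducedness of $b_1,\ldots,b_k$ plays no role in proving the inequality itself; it enters only afterwards, through bounds such as $\|b_1\|\le 2^{(j-1)/2}\|b_j^*\|$, to guarantee that $\delta$ is large enough to be useful in the reduction step of the main argument.
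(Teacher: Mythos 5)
Your proof is correct, and it supplies something the paper itself does not contain: Lemma~2.5 as stated in the paper is purely a construction (definitions of $\lambda$, $c_1$, and $\delta$) with no assertion and no proof, the surrounding text simply deferring to Smart's book for the fact that these quantities give a computable lower bound for $l(\mathcal{L},y)$. You correctly identified that the implicit content is the inequality $l(\mathcal{L},y)\ge\delta$, and your argument is exactly the standard de Weger/Smart one that the paper is tacitly invoking: rewriting $\delta=\lambda\min_{1\le j\le k}\|b_j^*\|$, expanding $x-y$ in the Gram--Schmidt basis, observing that the coefficient of $b_{i_1}^*$ at the largest index $i_1$ with $n_{i_1}\ne z_{i_1}$ is exactly $n_{i_1}-z_{i_1}$, and then splitting into the cases $i_1>i_0$, $i_1=i_0$, $i_1<i_0$ (the last being vacuous since there $\lambda=0$). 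Your closing remark is also correct and worth keeping: reducedness of the basis plays no role in the inequality itself, which holds for any basis; it only ensures $c_1\le 2^{(k-1)/2}$, i.e.\ that $\delta$ is large enough relative to $\|b_1\|$ for the subsequent reduction step (Lemma~2.6) to be effective. One incidental benefit of your write-up is that it exposes a notational slip in the paper's later application, where the computed lower bound for $l(\mathcal{L},y)$ is called ``$c_1$'' even though, by the definition in Lemma~2.5, $c_1=\max_j\|b_1\|/\|b_j^*\|\ge 1$; the quantity actually playing that role is $\delta$.
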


In our application, we are given real numbers $\eta_0,\eta_1,\ldots,\eta_k$ which are linearly independent over $\mathbb{Q}$ and two positive constants $c_3$ and $c_4$ such that 
\begin{align}\label{2.9}
	|\eta_0+x_1\eta_1+\cdots +x_k \eta_k|\le c_3 \exp(-c_4 H),
\end{align}
where the integers $x_i$ are bounded as $|x_i|\le X_i$ with $X_i$ given upper bounds for $1\le i\le k$. We write $X_0:=\max\limits_{1\le i\le k}\{X_i\}$. The basic idea in such a situation, from \cite{Weg}, is to approximate the linear form \eqref{2.9} by an approximation lattice. So, we consider the lattice $\mathcal{L}$ generated by the columns of the matrix
$$ \mathcal{A}=\begin{pmatrix}
	1 & 0 &\ldots& 0 & 0 \\
	0 & 1 &\ldots& 0 & 0 \\
	\vdots & \vdots &\vdots& \vdots & \vdots \\
	0 & 0 &\ldots& 1 & 0 \\
	\lfloor C\eta_1\rfloor & \lfloor C\eta_2\rfloor&\ldots & \lfloor C\eta_{k-1}\rfloor& \lfloor C\eta_{k} \rfloor
\end{pmatrix} ,$$
where $C$ is a large constant usually of the size of about $X_0^k$ . Let us assume that we have an LLL--reduced basis $b_1,\ldots, b_k$ of $\mathcal{L}$ and that we have a lower bound $l\left(\mathcal{L},y\right)\ge c_1$ with $y:=(0,0,\ldots,-\lfloor C\eta_0\rfloor)$. Note that $ c_1$ can be computed by using the results of Lemma \ref{lem2.5}. Then, with these notations the following result  is Lemma VI.1 in \cite{SMA}.
\begin{lemma}[Lemma VI.1 in \cite{SMA}]\label{lem2.6}
	Let $S:=\displaystyle\sum_{i=1}^{k-1}X_i^2$ and $T:=\dfrac{1+\sum_{i=1}^{k}X_i}{2}$. If $\delta^2\ge T^2+S$, then inequality \eqref{2.9} implies that we either have $x_1=x_2=\cdots=x_{k-1}=0$ and $x_k=-\dfrac{\lfloor C\eta_0 \rfloor}{\lfloor C\eta_k \rfloor}$, or
	\[
	H\le \dfrac{1}{c_4}\left(\log(Cc_3)-\log\left(\sqrt{\delta^2-S}-T\right)\right).
	\]
\end{lemma}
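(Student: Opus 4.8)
The plan is to run the standard de Weger--Smart approximation-lattice argument: I would translate the analytic hypothesis \eqref{2.9} into a geometric lower bound for the distance from the point $y=(0,\dots,0,-\lfloor C\eta_0\rfloor)$ to the lattice $\mathcal L$ spanned by the columns of $\mathcal A$, and then feed in the LLL bound $l(\mathcal L,y)\ge\delta$ furnished by Lemma \ref{lem2.5}. The whole statement is really a \emph{translation} lemma, so I expect no deep obstacle, only careful bookkeeping; the two places that demand attention are the case split isolating the degenerate solution and the rounding-error accounting that pins down the constant $T$.

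First I would attach to each admissible tuple $(x_1,\dots,x_k)$ the lattice point $\mathbf v:=\mathcal A\,(x_1,\dots,x_k)^{T}\in\mathcal L$. From the shape of $\mathcal A$, its first $k-1$ coordinates are $x_1,\dots,x_{k-1}$ and its last coordinate is $\sum_{j=1}^{k}x_j\lfloor C\eta_j\rfloor$, so that
$$\mathbf v-y=\big(x_1,\dots,x_{k-1},\;\Theta\big),\qquad \Theta:=\lfloor C\eta_0\rfloor+\sum_{j=1}^{k}x_j\lfloor C\eta_j\rfloor.$$
The key dichotomy is whether $\mathbf v=y$. This happens precisely when $x_1=\cdots=x_{k-1}=0$ and $\Theta=0$, i.e. $x_k=-\lfloor C\eta_0\rfloor/\lfloor C\eta_k\rfloor$, which is exactly the first alternative in the statement. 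Otherwise $\mathbf v-y\neq 0$: if $y\in\mathcal L$ it is a nonzero lattice vector, and if $y\notin\mathcal L$ it is a genuine difference of a lattice point and $y$; in either branch the definition of $l(\mathcal L,y)$ together with Lemma \ref{lem2.5} yields $\|\mathbf v-y\|\ge\delta$.

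Next I would isolate the last coordinate. Since $\|\mathbf v-y\|^2=\sum_{i=1}^{k-1}x_i^2+\Theta^2$ and $\sum_{i=1}^{k-1}x_i^2\le\sum_{i=1}^{k-1}X_i^2=S$, the inequality $\|\mathbf v-y\|\ge\delta$ forces $\Theta^2\ge\delta^2-S$, hence $|\Theta|\ge\sqrt{\delta^2-S}$. Here the hypothesis $\delta^2\ge T^2+S$ is used exactly to guarantee $\delta^2-S\ge T^2\ge 0$, so that the square root is real and $\ge T$ and the eventual quantity $\sqrt{\delta^2-S}-T$ under the logarithm is nonnegative. For the matching upper bound I would write $\lfloor C\eta_j\rfloor=C\eta_j-\varepsilon_j$ with rounding error $|\varepsilon_j|\le\tfrac12$ for each $j$, so that with $\Lambda:=\eta_0+\sum_{j=1}^{k}x_j\eta_j$,
$$\Theta=C\Lambda-\Big(\varepsilon_0+\sum_{j=1}^{k}x_j\varepsilon_j\Big),\qquad |\Theta|\le C|\Lambda|+\tfrac12\Big(1+\sum_{j=1}^{k}|x_j|\Big)\le C|\Lambda|+T.$$
It is precisely this tallying of the $1+\sum_j|x_j|$ rounding contributions, each of size at most $\tfrac12$, that produces the constant $T=\big(1+\sum_i X_i\big)/2$.

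Finally I would chain the two estimates. Combining $\sqrt{\delta^2-S}\le|\Theta|\le C|\Lambda|+T$ gives $C|\Lambda|\ge\sqrt{\delta^2-S}-T$, and substituting $|\Lambda|\le c_3\exp(-c_4H)$ from \eqref{2.9} yields $Cc_3\exp(-c_4H)\ge\sqrt{\delta^2-S}-T>0$. Taking logarithms and solving for $H$ then gives exactly
$$H\le\frac{1}{c_4}\Big(\log(Cc_3)-\log\big(\sqrt{\delta^2-S}-T\big)\Big).$$
As noted, the only genuinely delicate steps are verifying that the LLL bound $\delta$ truly applies in the non-degenerate case (via the two-branch definition of $l(\mathcal L,y)$, which legitimises the geometric lower bound on $|\Theta|$) and the rounding bookkeeping that fixes $T$; the hypothesis $\delta^2\ge T^2+S$ serves only to keep $\sqrt{\delta^2-S}-T$ nonnegative so that the final logarithm, and hence the bound on $H$, is meaningful.
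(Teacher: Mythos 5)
Your argument is correct and is precisely the standard de Weger--Smart proof: the paper itself gives no proof of Lemma \ref{lem2.6}, quoting it directly as Lemma VI.1 of \cite{SMA}, and your reconstruction --- the dichotomy $\mathbf v=y$ (which yields the degenerate alternative $x_1=\cdots=x_{k-1}=0$, $x_k=-\lfloor C\eta_0\rfloor/\lfloor C\eta_k\rfloor$) versus $\|\mathbf v-y\|\ge\delta$, the decomposition $\|\mathbf v-y\|^2=\sum_{i=1}^{k-1}x_i^2+\Theta^2\le S+\Theta^2$, and the chain $\sqrt{\delta^2-S}\le|\Theta|\le C|\Lambda|+T$ combined with \eqref{2.9} --- is exactly how the cited source proceeds. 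One caveat worth recording: your rounding estimate $|\varepsilon_j|\le\tfrac12$ presupposes rounding to the \emph{nearest} integer, whereas the paper's matrix $\mathcal A$ is written with the floor $\lfloor\cdot\rfloor$, for which one only has $\varepsilon_j\in[0,1)$ and hence $|\Theta-C\Lambda|\le 1+\sum_{j}|x_j|$, i.e.\ twice the stated $T$; since the constant $T=\bigl(1+\sum_i X_i\bigr)/2$ in the lemma is only compatible with half-integer rounding errors, the nearest-integer reading you adopted is the intended one, and the mismatch is an inherited abuse of notation in the paper rather than a defect of your proof.
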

Finally, we present an analytic argument which is Lemma 7 in \cite{GL}. 
\begin{lemma}[Lemma 7 in \cite{GL}]\label{Guz} If $ s \geq 1 $, $T > (4s^2)^s$ and $T > \displaystyle \frac{z}{(\log z)^s}$, then $$z < 2^s T (\log T)^s.$$	
\end{lemma}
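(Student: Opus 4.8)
The plan is to treat this as a bootstrapping (self-improving) estimate on $z$. Rewriting the hypothesis $T > z/(\log z)^s$ as $z < T(\log z)^s$, the entire difficulty is that the right-hand side still contains $\log z$; the goal is to replace $\log z$ by $2\log T$, which instantly yields $z < T(2\log T)^s = 2^s T(\log T)^s$. So the substantive content reduces to a single intermediate claim, namely $\log z < 2\log T$, after which the final display follows by plugging this back into $z < T(\log z)^s$.

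Before anything else I would dispose of small $z$. If $z \le e$ the conclusion is automatic: the hypothesis $T > (4s^2)^s$ forces $T > 4$ and hence $\log T > 1$, so $2^s T(\log T)^s > 8 > e \ge z$. Thus I may assume $z > e$, which guarantees $\log z > 1$ and in particular makes $\log\log z$ positive, so that the logarithmic manipulations below are legitimate.

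For the key claim I would take logarithms in $z < T(\log z)^s$ to get $\log z < \log T + s\log\log z$, and then argue by contradiction: suppose $\log z \ge 2\log T$. Then $\log T \le \tfrac12\log z$, so the displayed inequality collapses to $\tfrac12\log z < s\log\log z$, i.e. with $w := \log z$ one has $w < 2s\log w$. On the other hand, the threshold hypothesis rewrites cleanly as $\log T > s\log(4s^2) = 2s\log(2s)$, whence $w \ge 2\log T > 4s\log(2s)$. I would then study $g(w) := w - 2s\log w$, which is increasing for $w > 2s$ (and $4s\log(2s) > 2s$ for $s \ge 1$), and verify that $g(4s\log(2s)) \ge 0$. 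After dividing by $2s$ and simplifying, this last inequality reduces to the elementary fact $s \ge \log(2s)$, valid for all $s \ge 1$ (the function $s - \log(2s)$ is nondecreasing there and already positive at $s=1$). Hence $g(w) > 0$, contradicting $w < 2s\log w$, and the claim $\log z < 2\log T$ is established.

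The main obstacle is calibrating the intermediate constant so that the hypothesis on $T$ is exactly strong enough. The choice of the bound $2\log T$, rather than a larger multiple, is precisely what produces the clean factor $2^s$ in the conclusion, and it is tight enough that the whole weight of the argument falls on checking $g(4s\log(2s)) \ge 0$; arranging for that to collapse to something as transparent as $s \ge \log(2s)$ is where one must use the hypothesis $T > (4s^2)^s$ in the sharp form $\log T > 2s\log(2s)$ and keep track of the monotonicity range $w > 2s$. Once $\log z < 2\log T$ is in hand no further estimation is needed: substituting into $z < T(\log z)^s$ gives $z < 2^s T(\log T)^s$ directly.
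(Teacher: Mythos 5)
Your proof is correct, but note that one point of comparison is moot: the paper itself offers no proof of this statement at all --- it is imported verbatim as Lemma 7 of G\'uzman and Luca \cite{GL} --- so the only real question is whether your blind reconstruction is sound, and it is. The chain of estimates checks out: the dismissal of $z\le e$ (where the conclusion is trivial, since $T>(4s^2)^s\ge 4$ already forces $2^sT(\log T)^s>8>e$) legitimately secures $\log z>1$ and $\log\log z>0$, which is needed both to pass from $T>z/(\log z)^s$ to $z<T(\log z)^s$ and to take logarithms; the threshold hypothesis enters in exactly the right form $\log T>s\log(4s^2)=2s\log(2s)$; the function $g(w)=w-2s\log w$ is indeed increasing for $w>2s$, the point $4s\log(2s)$ lies in that range for $s\ge1$ (equivalent to $2s>e^{1/2}$), and $g(4s\log(2s))\ge 0$ does reduce, after dividing by $2s$, to $\log\bigl(s/\log(2s)\bigr)\ge 0$, i.e.\ $s\ge\log(2s)$, which holds for all $s\ge1$. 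The resulting contradiction with $w<2s\log w$ establishes $\log z<2\log T$, and substitution gives $z<T(2\log T)^s=2^sT(\log T)^s$ as required. Your bootstrapping route (show $\log z<2\log T$, then feed it back into $z<T(\log z)^s$) is also essentially the mechanism of the original argument in \cite{GL}, so the reconstruction is not only valid but faithful to the source; the explicit handling of small $z$, which the statement leaves implicit, is a sensible addition rather than a deviation.
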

SageMath 9.5 is used to perform all computations in this work.

\section{Proof of the main result}
We do this in the following cases.
\subsection{The case $k=2$}
In this subsection we study the Diophantine equation
\begin{equation}\label{eq3.1}
	(L_n)^x = (L_m)^y, 
\end{equation}
where $n>m\ge 0$, $n\ne 1,m\ne 1$ and $x,y\in \mathbb{Z}$. 
\begin{lemma}
The only integer solution $(n, m)$ of equation \eqref{eq3.1} in the range $n>m\ge 0$, $n\ne 1,m\ne 1$  is $(n,m)=(3,0)$.
\end{lemma}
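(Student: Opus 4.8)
The plan is to solve the multiplicative-dependence problem for the classical Lucas sequence $(L_n)_{n\ge 0}=(L_n^{(2)})$. Equation \eqref{eq3.1} says $L_n^x=L_m^y$ with $n>m\ge 0$, $n\ne 1$, $m\ne 1$, and integer exponents $x,y$. The first observation is that $L_0=2$ and $L_m\ge 3$ for every $m\ge 2$ (since $L_2=3,L_3=4,L_4=7,\ldots$ is strictly increasing), so aside from the degenerate $m=0$ case the bases are all $\ge 3$. Since multiplicative dependence of $a,b\ge 2$ forces $a$ and $b$ to share the same set of prime factors, I would first reduce to a primitive-divisor argument rather than to a direct linear-forms-in-logarithms estimate, because $k=2$ is exactly the case the introduction flags as tractable via Carmichael's theorem.

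\textbf{Main steps.} First I would dispose of $m=0$: then $L_m=2$, so \eqref{eq3.1} reads $L_n^x=2^y$, forcing $L_n$ to be a power of $2$. The Lucas numbers that are powers of $2$ are exactly $L_0=2$ and $L_3=4$ (a classical fact, and in any case $L_n$ is never divisible by $4$ for $n\ge 4$ and is even only when $3\mid n$), which pins down $(n,m)=(3,0)$ as the unique solution in this branch, matching the claimed answer. Second, for $m\ge 2$ (so $L_m\ge 3$) I would invoke the Carmichael primitive divisor theorem for $(L_n)$: for $n$ large enough, $L_n$ has a prime factor $p$ not dividing any earlier Lucas number, in particular $p\mid L_n$ but $p\nmid L_m$. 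Then $p\mid L_n^x=L_m^y$ forces $p\mid L_m$, a contradiction, unless $x=0$; and $x=0$ forces $y=0$ by the same argument run with the roles reversed, giving multiplicative independence. The primitive-divisor theorem leaves only a bounded initial range of indices ($m<n\le$ some small bound, around $n\le 12$ as in the Fibonacci case) to check by hand.

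\textbf{The main obstacle} is making the bounded range genuinely finite and small. The clean statement of Carmichael's theorem for Lucas numbers $L_n^{(2)}$ guarantees a primitive prime divisor for all $n$ beyond an explicit threshold, but one must confirm the precise threshold and then verify directly that no pair $(m,n)$ with $2\le m<n$ below it yields $L_n^x=L_m^y$. This is a finite computation: for each such pair one checks whether $L_n$ and $L_m$ have the same radical (set of prime factors) and, if so, whether the prime-exponent vectors are proportional. Because $L_n$ grows and its factorizations diversify quickly, I expect no coincidences in this range, so the only surviving solution is the $m=0$ branch found above.

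\textbf{Wrapping up.} Combining the two branches, every solution with $n>m\ge 0$, $n\ne 1$, $m\ne 1$ reduces to $(n,m)=(3,0)$, which indeed satisfies $L_3=4=2^2=L_0^2$, i.e.\ $x=1,y=2$. Hence this is the unique solution, as claimed. An alternative to the primitive-divisor route, should a fully self-contained argument be preferred, is to apply Theorem \ref{thm:LMN} to the linear form $\Lambda=x\log\alpha-y\log\alpha+\log(\text{error terms})$ coming from the Binet formula \eqref{eq2.6}, bound $\max\{|x|,|y|\}$ from above, and finish by the reduction in Lemma \ref{lem:Leg}; but for $k=2$ the elementary primitive-divisor argument is shorter and I would prefer it.
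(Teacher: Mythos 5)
Your proposal follows essentially the same route as the paper: Carmichael's primitive divisor theorem reduces the problem to a small finite range of indices, which is then checked directly. The paper makes your ``explicit threshold'' concern precise via the identity $F_{2n}=F_nL_n$: for $n>6$ the primitive prime factor of $F_{2n}$ (which exists since $2n>12$) divides $L_n$ but divides no $L_m$ with $m<n$, because $L_m\mid F_{2m}$ and $2m<2n$; this kills all solutions with $x\ne 0$ and $n>6$, and the finite check over $0\le m<n\le 6$, $n,m\ne 1$ (i.e.\ over $L_0=2$, $L_2=3$, $L_3=4$, $L_4=7$, $L_5=11$, $L_6=18$) leaves only $(n,m)=(3,0)$. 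Your $m\ge 2$ branch and wrap-up are sound and match this.

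One genuine error to fix: in your $m=0$ branch (which, incidentally, is not needed as a separate case, since $(3,0)$ is caught by the finite check anyway), the parenthetical claim that $L_n$ is never divisible by $4$ for $n\ge 4$ is false. For example $L_9=76=4\cdot 19$, and in general $4\mid L_n$ exactly when $n\equiv 3\pmod 6$. The correct elementary statement is that $v_2(L_n)\le 2$ for every $n$ (the $2$-adic valuation is $2$ when $n\equiv 3\pmod 6$, is $1$ when $n\equiv 0\pmod 6$, and is $0$ otherwise), so any Lucas number that is a power of $2$ is at most $4$, forcing $n\le 3$ and hence $L_n\in\{1,2,4\}$. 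With that repair (or by simply citing the known classification of powers of two in the Lucas sequence, which is your primary justification), your argument is complete and agrees with the paper's.
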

\begin{proof}
By Carmichael's Primitive Divisor Theorem and the known relation $F_{2k} = F_k L_k$, it follows that for $k>6$, $L_k$ has a primitive prime factor (namely, the primitive prime factor of $F_{2k}$). In particular, equation \eqref{eq3.1} is impossible with $x\ne 0$ if $n>6$. One now checks that in the remaining range $0\le m<n\le 6$, the only convenient solution is $(m,n)=(3,0)$. 
\end{proof}

\subsection{The case $k\ge 3$}
The equation is 
\begin{equation}\label{eq3.2}
	(L_n^{(k)})^x = (L_m^{(k)})^y, 
\end{equation}
where none of $x,~y$ is zero. Since $n>m\ge 0$, and $n\ne 1,~m\ne 1$ we have that $n\ge 2$.  If $m\in [0,k]$, $m\ne 1$, we then have that either $m=0$ and $L_0^{(k)}=2$ or $m\ge 2$ and $L_m^{(k)}=3\cdot 2^{m-2}$.
From the above formulas, we get that if $0\le m\le k$, $m\ne 1$, then $n\ge k+1$. Furthermore, $P(L_n^{(k)})\le 3$. All $k$--generalized Lucas numbers $L_n^{(k)}$ with $P(L_n^{(k)})\le 7$ and $n\ge k+1$ have been found in \cite{Bat} and they are the following
 \begin{align*}
	&L_3^{(2)}=2^2,~~~~ L_4^{(2)}=7,~~~~ L_6^{(2)}=2\cdot 3^2,~~~~ L_4^{(3)}=2\cdot 5,~~~~ L_6^{(3)}=5\cdot 7, \\
	&L_7^{(3)}=2^6,~~~~ L_{12}^{(3)}=2\cdot 3^3\cdot 5^2,~~~~ L_{15}^{(3)}=2^4\cdot 3\cdot 5^2\cdot 7,
	~~~~ L_8^{(4)}=2^5\cdot 5 ~~\text{and}~~ L_{15}^{(10)}=2^2\cdot 5^3\cdot 7^2.
\end{align*}
We learn from the above list that the only possibilities are $(k,m,n)=(2,0,3)$ and $(3,0,6)$. From now on, we assume that $P(L_m^{(k)})\ge 11$. In particular, $n>m\ge k+1$. In addition, $L_n^{(k)}>L_m^{(k)}$ showing that 
$x<y$. It is a straightforward exercise to show that relation \eqref{eq2.4} together with \eqref{eq3.2} imply that $x < m$ and $y < n$. Therefore, $n=\max\{n,m,x,y,k\}$.

Assume next that $n\le 50$. Then $4\le n\le 50$, $3\le m\le 49$ and $3\le k\le 48$.  We wrote a code in SageMath to find all coincidences for which equations \eqref{eq3.2} hold. We find none in these ranges, see Appendix 1. From now on, $n>50$.

\subsection{An inequality for $n$ in terms of $k$}
\begin{lemma}\label{lem3.1}
	Let $(n,m,x,y,k)$ be a solution to \eqref{eq3.1} with $n>m\ge k+1$, $k\ge 3$, $n>50$ and $x<y<n$. Then 
	\begin{align*}
		n<8.5 \cdot10^{34}k^8 (\log k)^6.
	\end{align*} 
\end{lemma}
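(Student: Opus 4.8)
The plan is to distil two linear forms in logarithms from $\left(L_n^{(k)}\right)^x=\left(L_m^{(k)}\right)^y$ and to bound first $m$ and then $n$. Put $\beta:=(2\alpha-1)f_k(\alpha)$. By the Binet-like formula \eqref{eq2.6} we may write $L_j^{(k)}=\beta\alpha^{j-1}(1+\delta_j)$ with $|\delta_j|<c/\alpha^{j-1}$ for $j\in\{m,n\}$ and an absolute constant $c$. Taking logarithms of the equation and substituting produces the linear form
\[
\Lambda_1:=(x-y)\log\beta+\bigl((n-1)x-(m-1)y\bigr)\log\alpha=y\log(1+\delta_m)-x\log(1+\delta_n),
\]
so that, since $n>m\ge k+1$ and $x<y<n$ force the $m$-term to dominate, $|\Lambda_1|<c'n/\alpha^{m-1}$ with $c'$ absolute (this upper bound is unconditional).

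First I would bound $m$. Regarding $\beta$ as a single algebraic number, $\Lambda_1$ has only two terms, so I apply the sharp lower bound of Theorem \ref{thm:LMN} with $D=k$, $\gamma_1=\beta$, $\gamma_2=\alpha$. Here $h(\beta)\le h(f_k(\alpha))+h(2\alpha-1)=O(\log k)$ by \eqref{eq2.7} together with the fact that every conjugate of $2\alpha-1$ has modulus below $3$, while $h(\alpha)=(\log\alpha)/k$; hence one may take $\log A_1=O(\log k)$ and $\log A_2=1/k$, and $b'=O(n)$. The nonvanishing of $\Lambda_1$ is exactly where Lemmas \ref{lemGGL} and \ref{lemGL} are used: a vanishing relation $\beta^{x-y}\alpha^{(n-1)x-(m-1)y}=1$, after taking $N_{\mathbb K/\mathbb Q}$ and using $|N(\alpha)|=1$ and the values of $N(2\alpha-1)$, $N(f_k(\alpha))$, collapses to the Diophantine equation of Lemma \ref{lemGGL}, which is insoluble for $k\ge 3$. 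Comparing the Laurent--Mignotte--Nesterenko lower bound for $\log|\Lambda_1|$ with the upper bound $\log|\Lambda_1|<\log(c'n)-(m-1)\log\alpha$ and using $\log\alpha>1/2$ yields a bound of the form $m<\kappa_1 k^3(\log k)(\log n)^2$. By \eqref{eq2.4} this gives $\log L_m^{(k)}<\kappa_2 k^3(\log k)(\log n)^2$, and, reading $x\log L_n^{(k)}=y\log L_m^{(k)}$ together with $L_n^{(k)}>\alpha^{n-1}$, also $x=O\!\bigl(k^3(\log k)(\log n)^2\bigr)$.

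Next I would bound $n$. Isolating the dominant term of $L_n^{(k)}$ in the equation gives the second form
\[
\Gamma_2:=f_k(\alpha)^{x}(2\alpha-1)^{x}\alpha^{(n-1)x}\bigl(L_m^{(k)}\bigr)^{-y}-1=(1+\delta_n)^{-x}-1,
\]
whence $|\Gamma_2|<c''x/\alpha^{n-1}$, which is genuinely small because $\alpha^{n-1}$ dwarfs $x$. Now $\Gamma_2$ is a four-term form, the rational integer $L_m^{(k)}$ contributing the fourth algebraic number with height $\log L_m^{(k)}$, so I apply Matveev's Theorem \ref{thm:Mat} with $t=4$, $D=k$, and $A_1=2k\log k$, $A_2=k\log 3$, $A_3=\log 2$, $A_4=k\log L_m^{(k)}$, and $B=O\!\bigl(nk^3(\log k)(\log n)^2\bigr)$; the nonvanishing of $\Gamma_2$ again follows from a conjugation/norm argument as in the previous step. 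Inserting the bound on $\log L_m^{(k)}$ from the first step and using $\log\alpha>1/2$, the resulting inequality takes the shape
\[
n<\kappa_3 k^8(\log k)^3(\log n)^3 .
\]

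The final step is analytic. Writing this as $n/(\log n)^3<\kappa_3 k^8(\log k)^3=:T$ and invoking Lemma \ref{Guz} with $s=3$ (applicable since $T>(4\cdot 9)^3$ for all $k\ge 3$) gives $n<2^3T(\log T)^3$; as $\log T=O(\log k)$, the factor $(\log T)^3$ supplies the extra $(\log k)^3$ and, after collecting the explicit constants, produces $n<8.5\cdot10^{34}k^8(\log k)^6$. I expect the principal difficulty to be bookkeeping rather than conceptual: obtaining the exponent $k^8$ rather than $k^9$ hinges on using the sharper two-term estimate of Theorem \ref{thm:LMN} (not Matveev) in the first step, and propagating the large numerical constants of Matveev and of Laurent--Mignotte--Nesterenko through both applications so as to land precisely on the coefficient $8.5\cdot10^{34}$ demands careful, if routine, estimation.
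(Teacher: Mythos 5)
Your proposal is correct in substance and reaches the required shape of bound, but it is organized genuinely differently from the paper's proof. The paper keeps the first linear form as a \emph{three}-term form in $f_k(\alpha)$, $2\alpha-1$, $\alpha$ and applies Matveev's Theorem \ref{thm:Mat}, getting $m\log 1.5\lesssim k^3(\log k)^2\log n$ (inequality \eqref{eq3.11}); it then splits into two cases: when $n\le m^2$ it closes on $m$ alone with Lemma \ref{Guz} ($s=1$) and uses $n\le m^2$, while when $n>m^2$ it applies the four-term Matveev bound \eqref{eq3.14} with $A_4=2mk<2k\sqrt{n}$, compares against \eqref{eq3.13}, applies Lemma \ref{Guz} to $z=\sqrt{n}$ (again $s=1$), and squares at the end (which is exactly where $8.5\cdot 10^{34}\approx(2.9\cdot 10^{17})^2$ comes from). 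You instead (i) group $\beta=(2\alpha-1)f_k(\alpha)$ into a single algebraic number of height $O(\log k)$ and use the two-term bound of Theorem \ref{thm:LMN} to get $m\lesssim k^3(\log k)(\log n)^2$ with no case distinction; (ii) feed that bound directly into $A_4$ of the four-term Matveev application to get $n\lesssim k^8(\log k)^3(\log n)^3$; (iii) finish with a single application of Lemma \ref{Guz} with $s=3$. Both routes land on $k^8(\log k)^6$; yours trades the case split for the $s=3$ form of the analytic lemma and is arguably cleaner. A further point in your favour: your height bookkeeping for $2\alpha-1$ (all conjugates of modulus $<3$, hence $h(2\alpha-1)\le\log 3$, hence $A_2=k\log 3$) is sound, whereas the paper's choice $A_2=A_3=1$ rests on the claim $h(2\alpha-1)<3/k$, which fails for $k\ge 5$ since $h(2\alpha-1)\ge\tfrac1k\log|N(2\alpha-1)|=\tfrac1k\log(2^{k+1}-3)$; your grouping of $\beta$ is precisely the repair that still yields $k^8$. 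One quibble with your closing remark: the saving of a power of $k$ is due to the grouping, not to Theorem \ref{thm:LMN} itself --- a two-term Matveev application to $\beta,\alpha$ gives $m\lesssim k^3(\log k)^2\log n$ and the same final $k^8(\log k)^6$.

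Three small gaps to patch. First, Theorem \ref{thm:LMN} as formulated requires both coefficients nonzero, so the degenerate case $(n-1)x=(m-1)y$ must be handled separately; there $\Lambda_1=(x-y)\log\beta$, so $|\Lambda_1|\ge\log(5/4)$, and your upper bound $c'n/\alpha^{m-1}$ forces $m=O(\log n)$, which is stronger than what LMN gives anyway. Second, $b'$ can be of size $n^2$, not $O(n)$, since the coefficient of $\log\alpha$ is $(n-1)x-(m-1)y$; this is harmless because only $\log b'$ enters. Third, the constant $8.5\cdot10^{34}$ is specific to the paper's computation, so your route owes its own constant chase; propagating the explicit constants ($1.4\cdot 30^7\cdot 4^{4.5}$ from Matveev, $24.34$ from Theorem \ref{thm:LMN}, and $\log T\lesssim 45\log k$ for $k\ge 3$) gives a constant on the order of $10^{23}$, comfortably below $8.5\cdot10^{34}$, so the stated inequality follows a fortiori.
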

\begin{proof} Here, we first go back to \eqref{eq2.6} and write it as
\begin{align}\label{eq3.5}
	L_n^{(k)}=f_k(\alpha)(2\alpha-1)\alpha^{n-1}+e_k(n), ~~\text{where}~~ |e_k(n)|<1.5.
\end{align}
Hence, 
\begin{align}\nonumber
	\left(L_n^{(k)}\right)^x=\left(f_k(\alpha)\right)^x(2\alpha-1)^x\alpha^{(n-1)x}\left[1+\dfrac{e_k(n)}{f_k(\alpha)(2\alpha-1)\alpha^{n-1}}\right]^x.
\end{align}
Now, we study the elements
\begin{align}\label{eq3.6}
	(1+r)^x~~\text{and}~~z:=xr,~~~~\text{where}~~r=\dfrac{e_k(n)}{f_k(\alpha)(2\alpha-1)\alpha^{n-1}}.
\end{align}
Since $k\ge 3$, then $\alpha\ge 1.75$ by relation \eqref{eq2.3} and so $2\alpha-1\ge 2.5$. Moreover, $f_k(\alpha)>{1}/{2}$ by inequality \eqref{eq2.5}. This means that 
\begin{align*}
	|r|=\left|\dfrac{e_k(n)}{f_k(\alpha)(2\alpha-1)\alpha^{n-1}}\right|<\dfrac{1.5}{0.5\cdot 2.5\cdot 1.5^{n-1}}=\dfrac{1.2}{1.5^{n-1}}<\dfrac{1}{1.5^{n-2}},
\end{align*}
and 
\begin{align*}
	|z|=x|r|<\dfrac{n}{1.5^{n-2}}.
\end{align*}
In particular, $|z|<10^{-6}$ for all $n>50$. Now, if $r<0$, then 
\begin{align*}
1>(1+r)^x = \exp \left(x\log(1-|r|)\right)	\ge \exp(-2|z|)>1-2|z|,
\end{align*} 
and if $r>0$, then
\begin{align*}
	1<(1+r)^x=\left(1+\dfrac{|z|}{x}\right)^x<\exp(|z|)<1+2|z|,
\end{align*}
because $|r|<0.5$ for all $n>50$ and $|z|<10^{-6}$ is very small. Therefore in both cases we have that 
\begin{align}\label{eq3.7}
	\left|\left(L_n^{(k)}\right)^x-\left(f_k(\alpha)\right)^x(2\alpha-1)^x\alpha^{(n-1)x}\right|&<2|z|\left(f_k(\alpha)\right)^x(2\alpha-1)^x\alpha^{(n-1)x}\nonumber\\
	&< \dfrac{2n\left(f_k(\alpha)\right)^x(2\alpha-1)^x\alpha^{(n-1)x}}{1.5^{n-2}}.
\end{align}
We now use equation \eqref{eq3.2} together with \eqref{eq3.7} to have
	\begin{align}\label{eq3.8}
		&\left|\left(f_k(\alpha)\right)^x(2\alpha-1)^x\alpha^{(n-1)x}-\left(f_k(\alpha)\right)^y(2\alpha-1)^y\alpha^{(m-1)y}\right| \nonumber\\
		&=\left|\left(f_k(\alpha)\right)^x(2\alpha-1)^x\alpha^{(n-1)x}-\left(f_k(\alpha)\right)^y(2\alpha-1)^y\alpha^{(m-1)y}-\left(L_n^{(k)}\right)^x + \left(L_m^{(k)}\right)^y\right|\nonumber\\
		&<\left|\left(L_n^{(k)}\right)^x-\left(f_k(\alpha)\right)^x(2\alpha-1)^x\alpha^{(n-1)x}\right|+\left|\left(L_m^{(k)}\right)^y-\left(f_k(\alpha)\right)^y(2\alpha-1)^y\alpha^{(m-1)y}\right|\nonumber\\
		&<\dfrac{2n\left(f_k(\alpha)\right)^x(2\alpha-1)^x\alpha^{(n-1)x}}{1.5^{n-2}}+\dfrac{2m\left(f_k(\alpha)\right)^y(2\alpha-1)^y\alpha^{(m-1)y}}{1.5^{m-2}}\nonumber\\
		&<\dfrac{4m}{1.5^{m-2}}\left(\left(f_k(\alpha)\right)^x(2\alpha-1)^x\alpha^{(n-1)x}+\left(f_k(\alpha)\right)^y(2\alpha-1)^y\alpha^{(m-1)y}\right),
	\end{align}
where in the last inequality we have used the fact that $x\mapsto x/1.5^x$ is decreasing for all $x\ge 3$, together with $n>m\ge k+1\ge 4$. 

Dividing both sides of \eqref{eq3.8} by $\max\left\{\left(f_k(\alpha)\right)^x(2\alpha-1)^x\alpha^{(n-1)x},~\left(f_k(\alpha)\right)^y(2\alpha-1)^y\alpha^{(m-1)y}\right\}$, we get
\begin{align}\label{eq3.9}
&\left|1-\left(f_k(\alpha)\right)^{\Delta(y-x)}(2\alpha-1)^{\Delta(y-x)}\alpha^{\Delta((m-1)y-(n-1)x)}\right|\nonumber\\
&<\dfrac{4m}{1.5^{m-2}}\left(1+\left(f_k(\alpha)\right)^{\Delta(y-x)}(2\alpha-1)^{\Delta(y-x)}\alpha^{\Delta((m-1)y-(n-1)x)}\right)< \dfrac{4m}{1.5^{m-2}}(1+1)<\dfrac{19m}{1.5^{m}}	,
\end{align}
where $\Delta\in \{\pm 1\}$. 
We now intend to apply Theorem \ref{thm:Mat} on the left--hand side of \eqref{eq3.9}. Let $$\Gamma:=\left(f_k(\alpha)\right)^{\Delta(y-x)}(2\alpha-1)^{\Delta(y-x)}\alpha^{\Delta((m-1)y-(n-1)x)}-1=e^{\Lambda}-1.$$
If $\Lambda=0$, then 
\begin{align*}
	\left(f_k(\alpha)\right)^{y-x}=(2\alpha-1)^{x-y}\alpha^{(n-1)x-(m-1)y}.
\end{align*}
Applying norms in ${\mathbb K}={\mathbb Q}(\alpha)$ and using $|N(\alpha)|=1$, and item (i) of Lemma \ref{lemGL}, the above equation becomes
\begin{align*}
	|N\left(f_k(\alpha)\right)|^{y-x}=|N(2\alpha-1)|^{x-y},
\end{align*}
which implies that
\begin{align*}
	\dfrac{1}{|N\left(f_k(\alpha)\right)|}=|N(2\alpha-1)|.
\end{align*}
Here, we used that $y>x$ (in particular, $y-x\ne 0$). Now, by item (ii) of Lemma \ref{lemGL}, we have from the above that
	\[
 \frac{2^{k+1}k^k - (k + 1)^{k+1}}{(k - 1)^2}=2^{k+1} - 3,
\]
which gives $k=2$, by Lemma \ref{lemGGL}. This contradicts the working assumption that $k\ge 3$. Thus, $\Lambda\ne 0$. We use the field $\mathbb{Q}(\alpha)$ with
degree $D = k$. Here, $t = 3$,
\begin{equation}\nonumber
	\begin{aligned}
		\gamma_{1}&:=f_k(\alpha), ~~~~~~\gamma_{2}:=2\alpha-1, ~~~~~\gamma_{3}:=\alpha,\\
		b_1&:=\Delta(y-x), ~~b_2:=\Delta(y-x), ~~b_3:=\Delta((m-1)y-(n-1)x).
	\end{aligned}
\end{equation}
We need to ensure that  $B \geq \max\{|b_1|, |b_2|, |b_3|\}$, so we can take $B:=n^2$.
We also need to ensure that $A_i \geq \max\{Dh(\gamma_{i}), |\log\gamma_{i}|, 0.16\}$ for $ i=1,2,3 $. So, we take 
$A_1: = Dh(\gamma_{1}) = 2k\log k$. Since $h(\alpha)=(\log \alpha)/k$ and $h(2\alpha-1)<3/k$, we can take 
$A_2 := A_3 := 1$. 
Then, by Theorem \ref{thm:Mat},
\begin{align}\label{eq3.10}
	\log |\Gamma| &> -1.4\cdot 30^6 \cdot3^{4.5}\cdot k^2 (1+\log k)(1+2\log n)\cdot2k\log k\nonumber\\
	&> -1.6\cdot 10^{12}k^3(\log k)^2 \log n,
\end{align}
since $k\ge 3$ and $n>50$. Comparing \eqref{eq3.9} and \eqref{eq3.10}, we get
\begin{align}\label{eq3.11}
	m\log 1.5-\log 19m&<1.6\cdot 10^{12}k^3(\log k)^2 \log n
\end{align}
Next, we proceed in two cases, that is, we distinguish between the cases $n\le m^2$ and $n>m^2$.
\begin{enumerate}[(a)]
	\item When $n\le m^2$, we use \eqref{eq3.11} to obtain  that
\begin{align*}
m\log 1.5-\log 19m&<1.6\cdot 10^{12}k^3(\log k)^2 \log m^2;\\
	m\log 1.5&<3.2\cdot 10^{12}k^3(\log k)^2 \log m+\log 19+\log m;\\
	\dfrac{m}{\log m}&<8\cdot 10^{12}k^3(\log k)^2 
\end{align*}	
We now apply Lemma \ref{Guz} with the data $z:=m$, $s:=1$ and $T:=8\cdot 10^{12}k^3(\log k)^2 >(4s^2)^s=4$ since $k\ge 3$. We get 
\begin{align}\label{eq3.12}
m&<2\cdot 8\cdot 10^{12}k^3(\log k)^2\log [8\cdot 10^{12}k^3(\log k)^2]\nonumber\\
&=1.6\cdot 10^{13}k^3(\log k)^2\left(\log 8\cdot 10^{12}+3\log k+2\log\log k\right)\nonumber\\
&<1.6\cdot 10^{13}k^3(\log k)^2\left(30+5\log k\right)
 <5.3\cdot 10^{14}k^3 (\log k)^3.
\end{align}
So, we get that $n\le m^2<\left(5.3\cdot 10^{14}k^3 (\log k)^3\right)^2<2.9\times 10^{29}k^6 (\log k)^6$.
\item When $n>m^2$, we return to \eqref{eq3.2} and rewrite it using \eqref{eq3.7} as 
\begin{align*}
	\left|\left(L_m^{(k)}\right)^y-\left(f_k(\alpha)\right)^x(2\alpha-1)^x\alpha^{(n-1)x}\right|&< \dfrac{2n\left(f_k(\alpha)\right)^x(2\alpha-1)^x\alpha^{(n-1)x}}{1.5^{n-2}}.
\end{align*}
Dividing both sides of the above inequality by $\left(f_k(\alpha)\right)^x(2\alpha-1)^x\alpha^{(n-1)x}$, we get
\begin{align}\label{eq3.13}
	\left|\left(f_k(\alpha)\right)^{-x}(2\alpha-1)^{-x}\alpha^{-(n-1)x}\left(L_m^{(k)}\right)^y-1\right|&< \dfrac{4.5n}{1.5^{n}}<\dfrac{1}{1.5^{n/2}},
\end{align}
since $n>50$. 
We again apply Theorem \ref{thm:Mat} on the left--hand side of \eqref{eq3.13}. Here, we let $$\Gamma_1:=\left(f_k(\alpha)\right)^{-x}(2\alpha-1)^{-x}\alpha^{-(n-1)x}\left(L_m^{(k)}\right)^y-1=e^{\Lambda_1}-1.$$ Again, $\Lambda_1 \ne 0$, otherwise we would have $$\left(f_k(\alpha)\right)^{x}(2\alpha-1)^{x}\alpha^{(n-1)x}=\left(L_m^{(k)}\right)^y=\left(L_n^{(k)}\right)^x.$$
Upon simplifying the exponents, we get $f_k(\alpha)(2\alpha-1)\alpha^{n-1}=L_n^{(k)}$. Taking norms in ${\mathbb K}={\mathbb Q}(\alpha)$ 
and using that  $|N(\alpha)|=1$, by item (i) and (ii) of Lemma \ref{lemGL}, we get that
\begin{align*}
	1> N((2\alpha - 1)f_k(\alpha))=N(L_n^{(k)})=(L_n^{(k)})^k>1,
\end{align*}
a contradiction. Thus, $\Lambda_1\ne 0$. We again use the field $\mathbb{Q}(\alpha)$ with
degree $D = k$. Here, $t = 4$,
\begin{equation}\nonumber
	\begin{aligned}
		\gamma_{1}&:=f_k(\alpha), ~~\gamma_{2}:=2\alpha-1, ~~\gamma_{3}:=\alpha,~~\gamma_{4}:=L_m^{(k)},\\
		b_1&:=-x, ~~b_2:=-x, ~~b_3:=-(n-1)x ~~b_4=y.
	\end{aligned}
\end{equation}
We need $B \geq \max\{|b_1|, |b_2|, |b_3|, |b_4|\}$, so we can take $B:=n^2$. Like before, we take
$A_1  := 2k\log k$, and we can still take 
$A_2 := A_3: = 1$. For $A_4$, we note that $L_m^{(k)}\le 2\alpha^{m}$ by inequality \eqref{eq2.4}, so we can take $A_4 = 2mk$. Thus, by Theorem \ref{thm:Mat},
\begin{align}\label{eq3.14}
	\log |\Gamma| &> -1.4\cdot 30^7 \cdot 4^{4.5}\cdot k^2 (1+\log k)(1+2\log n)\cdot2k\log k\cdot 2mk\nonumber\\
	&> -3.5\cdot 10^{14}k^4(\log k)^2 \sqrt{n}\log n, ~~~\text{since}~m^2<n~~\text{in this case (b)}.
\end{align}
Comparing \eqref{eq3.13} and \eqref{eq3.14}, we get
\begin{align}\label{eq3.15}
	\dfrac{n}{2}\log 1.5&< 3.5\cdot 10^{14}k^4(\log k)^2 \sqrt{n}\log n; \nonumber\\
	\sqrt{n}&<1.8\cdot 10^{15}k^4(\log k)^2 \log n;\nonumber\\
	\dfrac{\sqrt{n}}{\log \sqrt{n}}&<3.6\cdot 10^{15}k^4(\log k)^2 .
\end{align}
Again, we now apply Lemma \ref{Guz} with $z:=\sqrt{n}$, $s:=1$ and $T:=3.6\cdot 10^{15}k^4(\log k)^2 >(4s^2)^s=4$ since $k\ge 3$. We get 
\begin{align}\label{eq3.16}
	\sqrt{n}&<2\times 3.6\cdot 10^{15}k^4(\log k)^2\log \left(3.6\cdot 10^{15}k^4(\log k)^2\right)\nonumber\\
	&=7.2\cdot 10^{15}k^4(\log k)^2\left(\log 3.6\cdot 10^{15}+4\log k+2\log\log k\right)\nonumber\\
	&<7.2\cdot 10^{15}k^4(\log k)^2\left(36+6\log k\right)\nonumber\\
	&<2.9\cdot 10^{17}k^4 (\log k)^3;\nonumber\\
	\text{or}~~~ n &< 8.5 \cdot10^{34}k^8 (\log k)^6.
\end{align}
\end{enumerate}
This completes the proof of Lemma \ref{lem3.1}.
\end{proof}

\subsection{Considerations on $k$}
In our analysis, we consider two distinct scenarios based on the value of $k$: when $k > 1000$ and when $k \leq 1000$. For the first scenario, we show that equation \eqref{eq3.2} has no solutions. Consequently, we focus on the case where $k \leq 1000$. However, it is noteworthy that Lemma \ref{lem3.1} imposes a different upper bound on $n$, specifically $n < 9.3\times 10^{63}$, which is exceedingly large and impractical for direct computation. To overcome this computational challenge, we reduce this upper bound in two cases, using inequalities analogous to \eqref{eq3.9} and \eqref{eq3.13}, involving the LLL--algorithm, as we shall elaborate.

\subsubsection{The case $k>1000$}
For this case, we have from Lemma \ref{lem3.1} that
\begin{align*}
	n < 8.5\cdot 10^{34}k^8 (\log k)^6<2^{2k/5},
\end{align*}
for all $k>1000$. Next, let $a$ and $b$ be nonnegative integers with $b>0$. We quote the following inequality which is proved as estimate (3.14) on page 229 of \cite{GGL}. It asserts that
\begin{align}\label{eq3.17}
	\left|\left(u_n^{(k)}\right)^x-2^{(n-2)x}(a+b)^x\right|<71\cdot \dfrac{2^{(n-2)x}(a+b)^x}{2^{2k/5}}.
\end{align}
For us here, $u_n^{(k)}:=L_n^{(k)}$, $a=L_0^{(k)}=2$ and $b=L_1^{(k)}=1$. Therefore, using inequality \eqref{eq3.17} together with relation \eqref{eq3.2}, we get
\begin{align}\label{eq3.18}
	\left|2^{(n-2)x}\cdot3^x-2^{(m-2)y}\cdot3^y\right|&=\left|2^{(n-2)x}\cdot3^x-2^{(m-2)y}\cdot3^y+\left(L_m^{(k)}\right)^y-\left(L_n^{(k)}\right)^x\right|\nonumber\\
	&\le\left|\left(L_n^{(k)}\right)^x-2^{(n-2)x}\cdot3^x\right|+\left|\left(L_m^{(k)}\right)^y-2^{(m-2)y}\cdot3^y\right|\nonumber\\
	&<71\cdot \dfrac{2^{(n-2)x}\cdot 3^x+2^{(m-2)y}\cdot 3^y}{2^{2k/5}}.
\end{align}
If we divide through by $\max\left\{2^{(n-2)x}\cdot3^x,~2^{(m-2)y}\cdot3^y\right\}$, we get
\begin{align*}
	\left|1-2^{\varepsilon((n-2)x-(m-2)y)}\cdot3^{\varepsilon(x-y)}\right|<\dfrac{142}{2^{2k/5}},
\end{align*}
where $\varepsilon\in \{\pm 1\}$. 
To proceed, let $\Gamma_2:=1-2^{\varepsilon((n-2)x-(m-2)y)}\cdot3^{\varepsilon(x-y)}$. If we write 
$$\Lambda_2:=\varepsilon((n-2)x-(m-2)y)\log 2+\varepsilon(x-y)\log 3,$$
then 
\begin{align}\label{eq3.19}
 |\Lambda_2|<e^{|\Lambda_2|}\left(e^{\Lambda_2}-1\right)<\dfrac{284}{2^{2k/5}},   
\end{align}
since $\left|e^{\Lambda_2}-1\right|=|\Gamma_2|<142/2^{2k/5}$ and $e^{|\Lambda_2|}\le1+|\Gamma_2|<2$ for all $k>1000$. We need to ensure that $\Gamma_2\ne 0$. But if $\Gamma_2=0$, we then get $x-y=0$ (from the exponent of $3$) and next $(n-2)x=(m-2)y$ (from the exponent of $2$), so since $x=y\ne 0$, we get $n-2=m-2$, or $n=m$, a contradiction. 

Now, we apply Theorem \ref{thm:LMN} on $\Lambda_2$ with $\gamma_1:=2$, $\gamma_{2}:=3$, $b_1:=\varepsilon((n-2)x-(m-2)y)$ and $b_2:=\varepsilon(x-y)$. Moreover, $\mathbb{K}:=\mathbb{Q}$, $D=1$ and so we can write $\log A_i:=2$, for $i=1,2$. Also, we have
$$
b':=\frac{|b_1|}{D\log A_2}+\frac{|b_2|}{D\log A_1}=\dfrac{1}{2}\left(|b_1|+|b_2|\right).
$$
Therefore,
\begin{align*}
\log |\Lambda_2|&\ge -24.34 \left(\max\left\{\log b'+0.14,~21,~0.5\right\}\right)^2\cdot 2\cdot 2\\
&>	-98 \left(\max\left\{\log b'+0.14,~21\right\}\right)^2.
\end{align*}
Using the upper bound from relation \eqref{eq3.19}, we get 
\begin{align*}
	\frac{2}{5}k\log 2-\log 284&<98 \left(\max\left\{\log b'+0.14,~21\right\}\right)^2,\\
	k&<354 \left(\max\left\{\log b'+0.14,~21\right\}\right)^2 + 21.
\end{align*}
Now, if $\max\left\{\log b'+0.14,~21\right\}=21$, then $k<156135$ and if $\max\left\{\log b'+0.14,~21\right\}=\log b'+0.14$, then 
\begin{align*}
	b'&=\dfrac{1}{2}\left(|b_1|+|b_2|\right)= \dfrac{1}{2}\left(|((n-2)x-(m-2)y)|+|(x-y)|\right)\\
	&\le \dfrac{n+n^2}{2}<n^2<7.3\cdot 10^{69}k^{16}(\log k)^{12},
\end{align*}
where we have used Lemma \ref{lem3.1}. Hence, 
\begin{align*}
	\max\left\{\log b'+0.14,~21\right\}&=\log b'+0.14\\
	&<\log \left(7.3\cdot 10^{69}k^{16}(\log k)^{12}\right)+0.14\\
	&=\log (7.3\cdot 10^{69})+16\log k+12\log\log k+0.14\\
&<162+28\log k<52\log k, \quad \text{for}\quad k>1000.
\end{align*}
We thus get that $k<354\cdot (52\log k)^2+21< 10^6(\log k)^2$. Applying Lemma \ref{Guz} with $z:=k$, $s:=2$ and $T:=10^{6} >(4s^2)^s=256$, we get $k<7.7\cdot 10^8$. Therefore, in all cases, $k<7.7\cdot 10^8$. With this upper bound, we have by Lemma \ref{lem3.1} that $n<7.8\cdot 10^{113}$.

Next, we divide \eqref{eq3.19} by $(x-y)\log 2$, we have
\begin{align}
	\left|\dfrac{\log 3}{\log 2} - \dfrac{(n-2)x-(m-2)y}{x-y} \right|<\dfrac{410}{2^{2k/5}(x-y)},
\end{align}
since $x$ and $y$ are distinct. By Lemma \ref{lem:Leg} with $\tau:=\dfrac{\log 3}{\log 2} $ and $M:=10^{114}$, we have 
\begin{align*}
	\dfrac{1}{(a(M)+2)(x-y)^2}<\left|\dfrac{\log 3}{\log 2} - \dfrac{(n-2)x-(m-2)y}{x-y} \right|<\dfrac{410}{2^{2k/5}(x-y)},
\end{align*}
where $a(M)=100$ (in fact, $q_{229}>10^{114}$ and $\max\{a_k: 0\le k\le 229\}=a_{218}=100$). The above inequality gives
\begin{align*}
	\dfrac{1}{(100+2)(x-y)^2}&<\dfrac{410}{2^{2k/5}(x-y)};\\
	2^{2k/5}&<41820(x-y)<41820n;\\
	&<3.6\cdot 10^{39}k^8(\log k)^6. 
\end{align*}
Taking logarithms both sides and simplifying, we get
\begin{align*}
	\dfrac{2}{5}k\log 2<\log (3.6\cdot 10^{39})+8\log k+6\log\log k<92+14\log k<28\log k,
\end{align*}
which implies that $k<101\log k$. Applying Lemma \ref{Guz} with $z:=k$, $s:=1$ and $T:=101 >(4s^2)^s=4$, we get $k<933$. This contradicts the working assumption $k>1000$.

\subsubsection{The case $k\le1000$}
In this case, since  $k\le 1000$, then $n<8.5 \cdot10^{34}(1000)^8 (\log 1000)^6<9.3\cdot 10^{63}$, by Lemma \ref{lem3.1}. The purpose here is to reduce this upper bound on $n$. To do this, we first go back to \eqref{eq3.9} and recall that we had already obtained 
	$$\Gamma:=\left(f_k(\alpha)\right)^{\Delta(y-x)}(2\alpha-1)^{\Delta(y-x)}\alpha^{\Delta((m-1)y-(n-1)x)}-1=e^{\Lambda}-1.$$
We already showed that $\Lambda\ne 0$, so $\Gamma\ne 0$. Assume for a moment that $m>14$, then we have that $\left|e^{\Lambda}-1\right|=|\Gamma|<1$ and $e^{|\Lambda|}\le1+|\Gamma|<2$, implying that 
\begin{align*}
	|(m-1)y-(n-1)x|&<\dfrac{1}{\log \alpha}\left| 2+|y-x|\log(2\alpha-1)+|y-x||\log f_k(\alpha)|  \right|\\
	&<\dfrac{1}{\log\alpha}(2+2n+n)=\dfrac{n}{\log\alpha}\cdot \left(3+\dfrac{2}{n}\right)<9n,
\end{align*}
since $m>14$. In the calculation above, we have used $|\log(2\alpha-1)|<2$, $|\log f_k(\alpha)|<1$ and $n:=\max\{x,y\}$. We can therefore rewrite \eqref{eq3.9} as
\begin{align*}
	|\Lambda|<e^{|\Lambda|}|e^{\Lambda}-1|<\dfrac{38m}{1.5^{m}}<\dfrac{10^{28}}{1.5^{m}},
\end{align*}
where we have used \eqref{eq3.12} and $k\le 1000$. 

Now, for each $k \in [3, 1000]$, we use the LLL--algorithm to compute a lower bound for the smallest nonzero number of the form 
$$|\Lambda|:=\left|(y-x)\log f_k(\alpha)+(y-x)\log(2\alpha-1)+((m-1)y-(n-1)x)\log\alpha\right|,$$
 with integer coefficients not exceeding $9n<8.4\cdot 10^{64}$ in absolute value. Specifically, we consider the approximation lattice
$$ \mathcal{A}=\begin{pmatrix}
	1 & 0 & 0 \\
	0 & 1 & 0 \\
	\lfloor C\log f_k(\alpha)\rfloor & \lfloor C\log(2\alpha-1)\rfloor& \lfloor C\log\alpha \rfloor
\end{pmatrix} ,$$
with $C:= 6\cdot 10^{194}$ and choose $y:=\left(0,0,0\right)$. Now, by Lemma \ref{lem2.5}, we get $$l\left(\mathcal{L},y\right)=|\Lambda|>c_1=10^{-68}\qquad\text{and}\qquad\delta=10^{66}.$$
So, Lemma \ref{lem2.6} gives $S=2.22\cdot 10^{130}$ and $T=1.26\cdot 10^{65}$. Since $\delta^2\ge T^2+S$, then choosing $c_3:=10^{28}$ and $c_4:=\log 1.5$, we get $m<891$.

Recall that we are trying to reduce the upper bound on $n$. At this point with a reduced bound on $m$, we proceed in two cases.
\begin{enumerate}[(a)]
\item When $n\le m^2$, we have from the reduced bound on $m$ that $n<891^2=793881$. With this new upper bound for $n$ we repeat the LLL--algorithm to get a lower bound of $|\Lambda|$, where now the coefficients of the linear form $|\Lambda|$ are integers not exceeding $9n<7.2\cdot 10^6$.
With the same approximation lattice and $C:=10^{21}$, we get $|\Lambda|>10^{-16}$, $S=2.54\cdot 10^{14}$ and $T=1.39\cdot 10^{7}$. Moreover, choosing $c_3:=38m=33858$ and $c_4:=\log 1.5$, we obtain that $m < 111$, or similarly $n\le m^2<12321$. Repeating this algorithm twice, we conclude that $n<1600$.

\item When $n>m^2$, then we go back to \eqref{eq3.13} where we deduced
$$\Gamma_1:=\left(f_k(\alpha)\right)^{-x}(2\alpha-1)^{-x}\alpha^{-(n-1)x}\left(L_m^{(k)}\right)^y-1=e^{\Lambda_1}-1.$$
Clearly, $\Gamma_1\ne0$ since we already showed that $\Lambda_1\ne 0$. If $\Gamma_1 > 0$, then $e^{\Lambda_1} - 1 > 0$, so from \eqref{eq3.13} we obtain
\[
0 < \Gamma_1 <  \frac{1}{1.5^{n/2}}.
\]
If $\Gamma_1 < 0$, then we can note from \eqref{eq3.13} that $1/1.5^{n/2} < 1/2$ since $n>50$. Hence, it follows that $|e^{\Lambda_1} - 1| < 1/2$ which implies $e^{\Lambda_1} < 2$. Since $\Gamma_1 < 0$, we obtain that
\begin{equation*}
	0 < |\Lambda_1| \leq e^{|\Lambda_1|} - 1 = e^{|\Lambda_1|}|e^{\Lambda_1} - 1| < \frac{2}{1.5^{n/2}}.
\end{equation*}
Thus, in all cases, we have 
\begin{equation}
	|\Lambda_1| < \frac{2}{1.5^{n/2}}.
\end{equation}
Observe that $|\Lambda_1|$ is an expression of the form
\[
|a_1 \log f_k(\alpha) + a_2 \log (2\alpha-1) + a_3 \log \alpha + a_4 \log L_m^{(k)}|,
\]
where $a_1 := -x$, $a_2 := -x$, $a_3 := -(n-1)x$, $a_4 := y$. From Lemma \ref{lem3.1}, we have
\[
\max\{|a_i| : 1 \leq i \leq 4\} < n^2 <  10^{128}.
\]
At this point when $k\in[3,1000]$ and $m\in[3,890]$, we consider the approximation lattice
$$ \mathcal{A}=\begin{pmatrix}
	1 & 0 & 0 & 0  \\
	0 & 1 & 0 & 0  \\
	0 & 0 & 1 & 0  \\
	
	\lfloor C\log f_k(\alpha)\rfloor & \lfloor C\log (2\alpha-1)\rfloor& \lfloor C\log \alpha \rfloor& \lfloor C\log L_m^{(k)} \rfloor 
\end{pmatrix} ,$$
with $C:= 10^{512}$ and choose $y:=\left(0,0,0,0\right)$. By Lemma \ref{lem2.5}, we get $|\Lambda_1|>c_1=10^{-130}$ and $\delta=10^{129}$. So, Lemma \ref{lem2.6} gives $S=2\cdot 10^{256}$ and $T=1.5\cdot 10^{128}$. Since $\delta^2\ge T^2+S$, then choosing $c_3:=2$ and $c_4:=\log 1.5$, we get $n/2<2178$. This implies that $n\le 4356$.

With this new upper bound for $n$ we repeat the LLL--algorithm once again to get a lower bound of $|\Lambda_1|$, where now the coefficients $a_i$ are integers satisfying 
\[
\max\{|a_i|: 1 \leq i \leq 4\} < n^2 < 1.9 \cdot 10^{7}.
\]
With the same approximation lattice and $C:=10^{30}$, we get $|\Lambda_1|>10^{-10}$, $\delta=10^8$ $S=1.45\cdot 10^{15}$ and $T=3.81\cdot 10^{7}$. We then obtain that $n \leq 258$.
\end{enumerate}
So, in all cases $n<1600$. Finally, we do a computational search for solutions to our Diophantine equation \eqref{eq3.2}, when $3 \leq k \leq 1000$ and $3 <m< n < 1600$. First, for $k$ fixed, we use SageMath to calculate 
\[ M(n, m) := \gcd(L^{(k)}_n, L^{(k)}_m) \]
and we print the pairs $(n, m)$, with $n, m \in [3, 1600]$, such that 
\[ \text{power\textunderscore mod}(M(n, m), n, L^{(k)}_n) = 0 \quad \text{and} \quad \text{power\textunderscore mod}(M(n, m), m, L^{(k)}_m) = 0. \]
Here, $\text{power\textunderscore mod}(A, r, B)$ calculate $A^r \pmod{B}$. To conclude, we verify which of these pairs correspond to solutions of equation \eqref{eq3.2}, for each $k \in [3, 1000]$. We find none, see Appendix 3. This program ran for about 20 hours.

\section*{Acknowledgments} 
The first author thanks the Eastern Africa Universities Mathematics Programme (EAUMP) for funding his doctoral studies. The first and last co-authors were supported in part by Grant \#2024-029-NUM from Wits, Johannesburg, South Africa. The last author worked on this paper during a fellowship at STIAS. This author thanks STIAS for hospitality and support.

\section*{Addresses}
$ ^{1} $ Department of Mathematics, School of Physical Sciences, College of Natural Sciences, Makerere University, Kampala, Uganda

 Email: \url{hbatte91@gmail.com}
 
 Email: \url{mahadi.ddamulira@mak.ac.ug}
 
 Email: \url{juma.kasozi@mak.ac.ug}

\vspace{0.35cm}
\noindent 
$ ^{2} $ School of Mathematics, Wits University, Johannesburg, South Africa and Centro de Ciencias Matem\'aticas UNAM, Morelia, Mexico 

Email: \url{Florian.Luca@wits.ac.za}
\newpage
\section*{Appendices}
\subsection*{Appendix 1}\label{app1}
\begin{verbatim}
memo = {}
def k_generalized_lucas(n, k):
    if (n, k) in memo:
        return memo[(n, k)]
    # Base cases
    if n < 2-k:
        return 0
    elif n == 0:
        return 2
    elif n == 1:
        return 1
    else:
         # Recurrence relation
         result = sum([k_generalized_lucas(n-i, k) for i in range(1, k+1)])
         memo[(n, k)] = result
         return result
def check_multiplicative_dependence(a, b):
    # Check for multiplicative dependence
    for x in range(1, n):  
       for y in range(1, n):
            if x != y and a ^ x == b ^ y:
                 return True
    return False
# Check for multiplicative dependence
for k in range(2, 51):
   for n in range(2, 51):
       for m in range(2, n):  # Ensure m < n
           lnk = k_generalized_lucas(n, k)
           lmk = k_generalized_lucas(m, k)
           if check_multiplicative_dependence(lnk, lmk):
              print(f"For k={k}, the numbers L_{n}^{(k)}={lnk} and L_{m}^{(k)}={lmk} 
              are multiplicatively dependent.")
\end{verbatim}
\subsection*{Appendix 2}\label{app2}
\begin{verbatim}
tau = log(3)/log(2)
M = 10^114
cf_tau = continued_fraction(tau)
N = 0

for convergent in cf_tau.convergents():
    if convergent.denominator() > M:
       break
    N += 1
    
print(f"The index N such that q_N > M is: {N}")
print(f"The convergent p_N/q_N is: {cf_tau.convergents()[N-1]}")

a_M = max(cf_tau[:N+1])
print(f"The value a(M) is: {a_M}")
	
\end{verbatim}
\subsection*{Appendix 3}\label{app3}
\begin{verbatim}
def k_generalized_lucas(n, k, memo={}):
    if (n, k) in memo:
       return memo[(n, k)]
    if n < 2 - k:
       return 0
    elif n == 0:
       return 2
    elif n == 1:
       return 1
    else:
       result = sum([k_generalized_lucas(n - i, k, memo) for i in range(1, k + 1)])
       memo[(n, k)] = result
       return result
# Function to check the GCD conditions
def check_gcd_conditions(n, m, k):
    ln = k_generalized_lucas(n, k)
    lm = k_generalized_lucas(m, k)
    gcd_val = gcd(ln, lm)
    if power_mod(gcd_val, n, ln) == 0 and power_mod(gcd_val, m, lm) == 0:
        return True
    return False
# Function to check the multiplicative dependence
def check_multiplicative_dependence(n, m, k):
    ln = k_generalized_lucas(n, k)
    lm = k_generalized_lucas(m, k)
    for x in range(1, n):
       for y in range(1, n):
           if x != y and ln ^ x == lm ^ y:
               return True
    return False
pairs = []
for k in range(3, 1001):
    for n in range(4, 1600):
        for m in range(3, n):
            if check_gcd_conditions(n, m, k):
                  pairs.append((k, n, m))
for k, n, m in pairs:
     if check_multiplicative_dependence(n, m, k):
          print(f"For k={k}, the pair (n={n}, m={m}) satisfies the 
          multiplicative dependence condition.")
\end{verbatim}
\newpage
\subsection*{Appendix 4}\label{app4}
\begin{verbatim}
from sage.all import *
# Define alpha and C
alpha = (1 + sqrt(5)) / 2
C = 6e194  # 6 * 10^194
	
# Define f_k(x) function
def f_k(x, k):
    return (x - 1) / (2 + (k + 1) * (x - 2))
# Initialize variables to store the best c1, delta, and corresponding k
best_c1 = float('inf')  # Initialize to a large value
best_delta = 0
best_k = None
	
# Loop over k values
for k in range(3, 1001):  # Adjust the range as needed
    # Define matrix B for each k
    B = Matrix(QQ, [[1, 0, 0], 
                   [0, 1, 0], 
                   [floor(C * (log(f_k(alpha, k)).n())), 
                    floor(C * (log(2*alpha - 1).n())), 
                     floor(C * (log(alpha).n()))]])
	
    # Apply LLL reduction to get a reduced basis
    B_reduced = B.LLL()
	
    # Perform Gram-Schmidt orthogonalization on the reduced basis
    B_orthogonalized = B_reduced.gram_schmidt()[0]
	
    # Compute c1
    c1 = min([B_reduced.column(0).norm() / B_orthogonalized[i].norm() 
    for i in range(B_reduced.ncols())])
	
    # Define y and the lattice L
    y = vector([0, 0, 0])  # Replace with your actual vector
    L = B_reduced * ZZ^B_reduced.ncols()
	
    # Compute lambda and delta
    if y not in L:
        z = B_reduced.inverse() * y
        i0 = max([i for i in range(len(z)) if z[i] != 0])
        lambda_val = abs(z[i0] - round(z[i0]))
    else:
        lambda_val = 1
	
    delta = lambda_val * B_reduced.column(0).norm() / c1
	
    # Update best_c1, best_delta, and best_k if a better c1 is found
    if c1 < best_c1:
       best_c1 = c1
       best_delta = delta
       best_k = k
	
# Output the best c1, delta, and corresponding k
print(f"Best k: {best_k}")
print(f"Best c1: {float(best_c1):.6e}")
print(f"Best delta: {float(best_delta):.6e}")
\end{verbatim}

\end{document}